\let\raggedright\justifying
\newtheorem{theorem}{Theorem}[section]
\newtheorem{lemma}[theorem]{Lemma}
\newtheorem{example}[theorem]{Example}
\newtheorem{definition}[theorem]{Definition}
\newcommand{\W}{\mathcal W}
\newcommand{\w}{_{_{\mathcal W}}}
\newcommand{\QT}{\mathcal Q\mathcal T}
\newcommand{\M}{\mathcal M}
\newcommand{\N}{\mathcal N}
\newcommand{\one}{{\bf 1}}
\begin{document}
\title{Algorithms for square root of semi-infinite quasi-Toeplitz $M$-matrices \thanks{The work of the first author was partly supported by  by the National Natural Science Foundation of China under grant No.12001262 and by Jiangxi Provincial Natural Science Foundation under grant No.20224BAB211006. The work of the third author was partly supported by the National Natural Science Foundation of China under grant No.12201591.   Version of \today}} 
\author{
Hongjia Chen\thanks{Department of Mathematics, Nanchang University, China. ({\tt chenhongjia@ncu.edu.cn})} 
\and 
Hyun-Min Kim\thanks{Department of Mathematics, Pusan National University, Korea.({\tt hyunmin@pusan.ac.kr})}
\and 
Jie Meng \thanks{School of Mathemtical Sciences , Ocean University of China, China.({\tt mengjie@ouc.edu.cn})}
}
\date{}\thispagestyle{empty}
\maketitle

\begin{abstract}
A quasi-Toeplitz $M$-matrix $A$ is an infinite $M$-matrix that can be written as the sum of a semi-infinite Toeplitz matrix and a correction  matrix.   This paper is concerned with computing the square root of invertible quasi-Toeplitz $M$-matrices which  preserves the quasi-Toeplitz structure.  We show that the Toeplitz part of the square root can be easily computed   through evaluation/interpolation at the $m$ roots of unity. This advantage allows  to propose  algorithms solely for the computation of correction part, whence we propose a fixed-point iteration and a structure-preserving doubling algorithm. Additionally, we show that the correction part can be approximated by solving a nonlinear matrix equation with coefficients of finite size followed by extending the solution to infinity.   Numerical experiments showing the efficiency of the proposed algorithms are performed..
  \end{abstract}

\section{Introduction}
$M$-matrices in the context of infinite dimensional spaces are called M-operators, which, to our knowledge, were firstly investigated in \cite{IM2}, since then related theoretical properties have been developed  in \cite{AS,MRK,IM2,IM3,IM,PN}. 
Quasi-Toeplitz $M$-matrices are infinite $M$-matrices with an almost Toeplitz structure, they  are encountered in the numerical solution of a quadratic matrix equation \cite{BMM} involved in  2-dimensional Quasi-Birth-Death (QBD) stochastic processes \cite{motyer-taylor}  and  are recently studied in \cite{qtm_laa} in terms of their theoretical and computational properties.

In this paper, we are interested in the quasi-Toeplitz $M$-matrices that belongs to the class  $\mathcal{QT}_{\infty}=\{T(a)+E: a(z)\in \W,  E\in \mathcal{K}_d(\ell^{\infty}) \}$, where $T(a)$ is a semi-infinite Toeplitz matrix associated with the function $a(z)=\sum_{i\in \mathbb Z}a_iz^i$ in the sense that $(T(a))_{i,j}=a_{j-i}$, $\W$ is the Wiener algebra, defined as the set $\mathcal W=\{a(z)=\sum_{i\in \mathbb Z}a_iz^i:z\in \mathbb T,  \|a\|_{\w}:=\sum_{i\in \mathbb Z}|a_i|<\infty\}$,  and $\mathcal K_d(\ell^{\infty})=\{E=(e_{i,j})_{i,j\in\mathbb Z^+}:\lim_i\sum_{j=1}^{\infty}|e_{i,j}|=0\}$.  It has been proved in \cite[Theorem 2.16]{BMML2020} that the  class $\mathcal{QT}_{\infty}$  is a Banach algebra with the infinity matrix norm $\|\cdot\|_{\infty}$, which turns out to be  $\| A\|_{\infty } = \sup_i\sum_{j=1}^{\infty}| a_{i,j}|$ for $A=(a_{i,j})_{i,j\in \mathbb Z^+}$.  For  $A=T(a)+E\in \QT_{\infty}$,   $T(a)$ is called the Toeplitz part with a symbol $a$,   $E$ is called the correction part. Matrices in the class $\QT_{\infty}$ have rich and elegant theoretical and computational properties,  we refer the reader to \cite{arXiv,QTeig,QTeig2,BSM1,BSM2,BMML,BMML2020,BSR,BMM,kim_meng,leonardo} for more details.

  For a quasi-Toeplitz $M$-matrix $A=T(a)+E_A\in \QT_{\infty}$,  it has been proved in \cite{qtm_laa} that if $A$ is an (invertible) $M$-matrix, then $T(a)$ is also an (invertible) $M$-matrix. Moreover, it shows that if  $A$ is  invertible,   there exists a unique quasi-Toeplitz $M$-matrix $S=T(s)+E_S\in \QT_{\infty}$ such that  $A=S^2$. Concerning the computation of  matrix $S$,  Binomial iteration and Cyclic Reduction (CR) algorithm have been proposed in \cite{qtm_laa}, where the CR algorithm  seems to be better suited in the numerical computations. However, both the Binomial iteration  and the CR algorithm exploit the quasi-Toeplitz structure indirectly by performing approximate operations of semi-infinite quasi-Toeplitz matrices in the format. It would be natural to ask  whether the quasi-Toeplitz structure can be fully exploited to propose more efficient algorithms.  
  
  Suppose $B=T(b)+E_B\in \QT_{\infty}$ satisfies $(I-B)^2=A$, where $A=T(a)+E_A$ is  a given quasi-Toeplitz $M$-matrix, then we have for the symbols of the Toeplitz parts that $(1-b(z))^2=a(z)$. Observe that for a positive integer $n>0$, there is always a unique Laurent polynomial $\hat{b}(z)=\sum_{i=-n+1}^n\hat{b}z^i$ that interpolates $b(z)$ at  the $2n$ roots of unity.  Based on the technic of evaluation/interpolation, we investigate computation of the coefficients $b_i$ of $b(z)=\sum_{i\in \mathbb Z}b_iz^i$, so that the Toeplitz part $T(b)$ of the quasi-Toeplitz $M$-matrix $A$ can be easily obtained. 
  
 Concerning the computation of the correction part,  we propose  a fixed-point iteration with a  linear convergence rate, and a structure-preserving doubling  algorithm,  which is of quadratic convergence rate. Moreover, we show that the correction part can be approximated by extending a finite size matrix to infinity, where the finite size matrix solves a nonlinear matrix equation. 
 Numerical experiments show that the proposed algorithms provide convergence acceleration in terms of CPU times comparing with the Binomial iteration  and CR algorithm proposed in  \cite{qtm_laa}, both of which keep the whole quasi-Toeplitz matrices in the computations.   

This paper is organized as follows. In the remaining part of this introduction, we recall some definitions and properties concerning quasi-Toeplitz matrices and  $M$-matrices.  Sections 2 and 3 concern with algorithms that fully exploit the quasi-Toeplitz  structure of square root of invertible quasi-Toeplitz $M$-matrices, in Section 2 we show how the Toeplitz part is computed, while in Section 3, we design and analyze the convergence of algorithms that are applicable in computing the correction part. In Section 4, we show that the correction part can be approximated by extending to infinity of the solution of a nonlinear matrix equation with finite size coefficients. In Section 5, we show by numerical examples the efficiency of the proposed algorithms.

  \subsection{Preliminary concepts}
 Let  $\ell^{\infty}$ be the space of sequences $\{x=(x_1,x_2,\ldots)\}$ such that  $\sup_{i\in \mathbb Z^+}|x_i|<\infty$,  
one can see that quasi-Toeplitz $M$-matrices in the class $\QT_{\infty}$ are bounded linear operators  from $\ell^{\infty}$ to $\ell^{\infty}$. Denote by  $\mathcal B(\ell^{\infty})$ the Banach space of bounded linear operators from $\ell^{\infty}$ to itself,  we first recall definition of $M$-operators on $\mathcal B(\ell^{\infty})$.  For  definition of more general $M$-operators on a real partially ordered Banach space, we refer the reader to \cite{MRK,IM,PN} and the references therein.  $M$-operators on the Banach space $\ell^{\infty}$ are defined as 
  \begin{definition}
  	 An operator $A\in \mathcal B(\ell^{\infty})$ is said to be a $Z$-operator if $A=sI-P$, with $s\geq 0$, $P(\ell^{\infty}_+)\subseteq \ell^{\infty}_+$, where $\ell^{\infty}_+=\{x=(x_i)_{i\in \mathbb Z^+}\in \ell^{\infty}: x_i\geq 0 \ for\ all\ i\}$. A  $Z$-operator is said to be an $M$-operator if $s\geq \rho(P)$, where $\rho(P)$ is the spectral radius of $P$. $A$ is an invertible $M$-operator if $s>\rho(P)$.  
  \end{definition}

As matrices in $\QT_{\infty}$ can be represented  as a matrix of infinite size, we keep using the term $M$-matrix when referring $M$-operators in  $\QT_{\infty}$. This way, a matrix $A\in \QT_{\infty}$ is said to be an $M$-matrix if $A=\beta I-B$ with $B\geq 0$ and $\beta\geq \rho(B)$, and $A$ is invertible if $\beta>\rho(B)$. Here $B\geq 0$ means that $B$ is an elementwise nonnegative infinite matrix. 

\medskip

The following lemma contains a collection of properties of quasi-Toeplitz matrices and  quasi-Toeplitz $M$-matrices, where properties (i) and (ii) have been proved in \cite{AB}, while  properties (iii) - (v) can be found from \cite{qtm_laa}. 
\begin{lemma}\label{summary}
If $A=T(a)+E_A\in \QT_{\infty}$ and $B = T(b) + E_B \in \QT_{\infty}$, then the following properties hold:

\begin{itemize}
\item[{\rm i)}] $AB = T(ab) - H(a^-)H(a^+)\in  \QT_{\infty}$, where  $(H(a^-))_{i,j}=(a_{-i-j+1})_{i,j\in \mathbb Z^+}$ and $(H(a^+))_{i,j}=(a_{i+j-1})_{i,j\in \mathbb Z^+}$;
\item[{\rm ii)}]  it holds that  $\|a\|_{\w}=\|T(a)\|_{\infty}\leq \|A\|_{\infty}$;
\item[{\rm iii)}]   $T(a)\geq 0$ if $A\geq 0$.
\item[{\rm iv)}]  $\|a\|_{\w}=a(1)$ if $T(a)\geq 0$.
\item[{\rm v)}]   T(a) is an (invertible) $M$-matrix if $A$ is an (invertible) $M$-matrix.
\end{itemize}
\end{lemma}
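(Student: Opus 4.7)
The plan is to treat the five items essentially independently, since each follows from a different structural feature of the class $\QT_{\infty}$. Since (i) and (ii) are stated as coming from \cite{AB} and (iii)--(v) from \cite{qtm_laa}, the goal of the proof write-up is to collect the arguments in a form tailored to the notation used here rather than to rediscover them from scratch.

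For (i) I would first establish the classical semi-infinite Toeplitz identity $T(a)T(b)=T(ab)-H(a^-)H(b^+)$ by a direct index calculation: the $(i,j)$-entry of $T(a)T(b)$ is $\sum_{k\ge 1}a_{k-i}b_{j-k}$, while the convolution coefficient $(ab)_{j-i}=\sum_{k\in\Z}a_{k-i}b_{j-k}$; subtracting isolates exactly the tail $k\le 0$, which is the Hankel product. Expanding $AB=(T(a)+E_A)(T(b)+E_B)$ then produces the asserted formula up to three correction terms of the form $T(a)E_B$, $E_AT(b)$, $E_AE_B$, each of which I would show belongs to $\K_d(\ell^{\infty})$ using the definition $\lim_i\sum_j|e_{i,j}|=0$ together with boundedness of $T(a)$, $T(b)$ in the infinity norm (proved in (ii)). This gives both the Toeplitz/Hankel decomposition and the closure $AB\in\QT_{\infty}$.

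For (ii), the identity $\|a\|_{\mathcal W}=\|T(a)\|_{\infty}$ follows because row $i$ of $T(a)$ has absolute row sum $\sum_{j\ge 1}|a_{j-i}|$, which increases to $\sum_{k\in\Z}|a_k|=\|a\|_{\mathcal W}$ as $i\to\infty$. The inequality $\|T(a)\|_{\infty}\le\|A\|_{\infty}$ then comes from writing $\sum_j|a_{j-i}|\le\sum_j|A_{i,j}|+\sum_j|(E_A)_{i,j}|$ and letting $i\to\infty$, using that the second term vanishes by $E_A\in\K_d(\ell^{\infty})$. For (iii), if $A\ge 0$, then fixing $k\in\Z$ and choosing $i,j\in\Zp$ with $j-i=k$ and $i$ arbitrarily large, we have $a_k+(E_A)_{i,j}\ge 0$; but $|(E_A)_{i,j}|\le\sum_{j'}|(E_A)_{i,j'}|\to 0$, so $a_k\ge 0$. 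Property (iv) is immediate: once all $a_k\ge 0$, $\|a\|\w=\sum a_k=a(1)$.

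For (v), write $A=\beta I-B$ with $B\ge 0$ and $\beta\ge\rho(B)$. Applying (iii) to $B=T(\tilde b)+E_B$ with $\tilde b(z)=\beta-a(z)+\beta\delta_{0}\text{-shift}$ (i.e., decomposing the $Z$-structure), one sees $T(\tilde b)\ge 0$ and hence $T(a)=\beta I-T(\tilde b)$ is a $Z$-matrix. It remains to show $\rho(T(\tilde b))\le\beta$, which I would do via the spectral characterization of Toeplitz operators together with (ii): $\rho(T(\tilde b))\le\|T(\tilde b)\|_{\infty}=\|\tilde b\|\w\le\|B\|_{\infty}$, and combine with the standard fact that for nonnegative operators $\rho(B)=\lim_n\|B^n\|^{1/n}$ to push the inequality through. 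For the invertible case the same argument replaces $\ge$ by $>$. The main obstacle I anticipate is this last spectral step in (v): equating norm-based bounds with spectral radii in the semi-infinite setting is delicate, and I would likely need to invoke the Banach algebra structure of $\QT_{\infty}$ together with the $M$-operator framework of \cite{MRK,IM,PN} to complete it cleanly, rather than attempting a bare-hands estimate.
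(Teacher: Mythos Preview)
The paper does not actually prove this lemma; it only records that (i)--(ii) are in \cite{AB} and (iii)--(v) in \cite{qtm_laa}. So there is no ``paper's proof'' to compare against, and your write-up is effectively supplying the arguments behind those citations. Your sketches for (i)--(iv) are correct and standard: the index computation for the Hankel defect in (i), the row-sum limit for (ii), the diagonal-limit argument for (iii), and the trivial (iv) all go through exactly as you describe.

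The genuine gap is in (v). Your displayed chain
\[
\rho(T(\tilde b))\le\|T(\tilde b)\|_{\infty}=\|\tilde b\|_{\w}\le\|B\|_{\infty}
\]
ends at $\|B\|_{\infty}$, but the hypothesis only gives $\rho(B)\le\beta$, and in general $\|B\|_{\infty}$ can strictly exceed $\rho(B)$. Invoking Gelfand's formula for $B$ alone, as you suggest, does not bridge this: you would need $\|B\|_{\infty}\le\beta$, which is not available. The vague appeal to ``the Banach algebra structure together with the $M$-operator framework'' does not identify the missing step.

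The clean fix is to apply (ii) not to $B$ but to the powers $B^n$. Since $B\ge 0$ we have $B^n\ge 0$, and by (i) the Toeplitz part of $B^n$ is $T(\tilde b^{\,n})$; hence (ii) gives $\|\tilde b^{\,n}\|_{\w}\le\|B^n\|_{\infty}$. Because $\tilde b$ has nonnegative coefficients (by (iii)), so does $\tilde b^{\,n}$, and (iv) yields $\|\tilde b^{\,n}\|_{\w}=\tilde b(1)^n$. Therefore $\tilde b(1)\le\|B^n\|_{\infty}^{1/n}\to\rho(B)\le\beta$, and finally
\[
\rho(T(\tilde b))\le\|T(\tilde b)\|_{\infty}=\tilde b(1)\le\rho(B)\le\beta,
\]
with strict inequality in the invertible case. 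This is the step your outline is missing.
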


The following lemma shows that an invertible $M$-matrix in the class $\QT_{\infty}$ admits a unique  quasi-Toeplitz $M$-matrix  as a square root. 
\begin{lemma}\cite[Theorem 3.6]{qtm_laa}\label{thm:uniquesquareroot}
Suppose $A=\beta(I-A_1)\in\QT_{\infty}$ satisfies $\beta>0$, $A_1\geq 0$ and $\|A_1\|_{\infty}<1$, then there is  a unique $B\in \QT_{\infty}$ such that $B\geq 0, \|B\|_{\infty}<1$, and $(I-B)^2=I-A_1$.
\end{lemma}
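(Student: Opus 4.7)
The plan is to construct $B$ explicitly as a norm-convergent binomial series inside the Banach algebra $\QT_{\infty}$ and then to pin down uniqueness by a short contraction argument. The whole strategy rides on the scalar identity $(1-x)^{1/2}=1-\sum_{k=1}^{\infty}c_k x^k$, valid for $|x|\le 1$, where $c_k=-\binom{1/2}{k}(-1)^k>0$ and $\sum_{k=1}^{\infty}c_k=1$ (the value at $x=1$ following from Abel's theorem). These are the only analytic facts I will transfer to the operator level.

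First I would set
\[
B:=\sum_{k=1}^{\infty}c_k A_1^k.
\]
Since $\QT_{\infty}$ is a Banach algebra under $\|\cdot\|_{\infty}$, each $A_1^k$ lies in $\QT_{\infty}$, and the hypothesis $\|A_1\|_{\infty}<1$ gives
\[
\sum_{k=1}^{\infty}c_k\,\|A_1^k\|_{\infty}\le \sum_{k=1}^{\infty}c_k\,\|A_1\|_{\infty}^k=1-(1-\|A_1\|_{\infty})^{1/2}<1.
\]
Thus the series is absolutely convergent in $\QT_{\infty}$; its sum $B$ therefore belongs to $\QT_{\infty}$ and satisfies $\|B\|_{\infty}<1$. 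Nonnegativity $B\ge 0$ follows because $A_1\ge 0$ forces $A_1^k\ge 0$ while each $c_k>0$. To check $(I-B)^2=I-A_1$, I would expand the square as a Cauchy product of the two absolutely convergent series; because the powers $A_1^k$ all commute, the formal power-series identity $(1-\sum c_k x^k)^2=1-x$ transfers verbatim, and the rearrangement is legal by absolute convergence.

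For uniqueness, suppose $B'\in\QT_{\infty}$ also satisfies $B'\ge 0$, $\|B'\|_{\infty}<1$, and $(I-B')^2=I-A_1$. Subtracting $2B-B^2=A_1=2B'-(B')^2$ and rearranging yields
\[
(2I-B)(B-B')=(B-B')B'.
\]
Since $\|B\|_{\infty}<1$, the operator $2I-B$ is invertible in $\QT_{\infty}$ with $\|(2I-B)^{-1}\|_{\infty}\le (2-\|B\|_{\infty})^{-1}$ (Neumann series), so
\[
\|B-B'\|_{\infty}\le \frac{\|B'\|_{\infty}}{2-\|B\|_{\infty}}\,\|B-B'\|_{\infty},
\]
and the factor on the right is strictly less than $1$ because $\|B\|_{\infty}+\|B'\|_{\infty}<2$. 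This forces $B=B'$.

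The main obstacle I anticipate is not the algebra, which is routine, but verifying that the infinite sum actually remains in $\QT_{\infty}$ and is not merely a bounded operator on $\ell^{\infty}$. This hinges on $\QT_{\infty}$ being norm-closed in $\mathcal B(\ell^{\infty})$, which is exactly the content of the Banach algebra property recalled from \cite{BMML2020} in the introduction. Once that closedness is invoked, absolute convergence of $\sum c_k A_1^k$ immediately places the limit in $\QT_{\infty}$, and the rest of the argument is routine bookkeeping.
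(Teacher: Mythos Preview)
Your argument is correct: the binomial series $B=\sum_{k\ge 1}c_k A_1^k$ converges absolutely in the Banach algebra $\QT_\infty$, lies in the nonnegative cone, has norm $1-(1-\|A_1\|_\infty)^{1/2}<1$, and squares as required; and the Sylvester-type identity $(2I-B)(B-B')=(B-B')B'$ together with the Neumann bound forces uniqueness.

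There is, however, nothing in the present paper to compare against: this lemma is simply quoted from \cite[Theorem~3.6]{qtm_laa} and is not re-proved here. From the surrounding discussion one can infer that the approach taken in \cite{qtm_laa} is the monotone ``Binomial iteration'' $Y_{k+1}=\tfrac12(A_1+Y_k^2)$, $Y_0=0$, which builds $B$ as the limit of an increasing sequence rather than as a single absolutely convergent series. The two routes reach the same object; your power-series argument has the advantage of delivering membership in $\QT_\infty$ and the strict norm bound in one line via the Banach-algebra closedness, while the iterative construction is what the paper actually exploits for computation.
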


For quasi-Toeplitz $M$-matrix $A=\gamma(I-A_1)\in \QT_{\infty}$ such that $A_1\geq 0$ and $\|A_1\|_{\infty}<1$, it can be seen from Lemma \ref{thm:uniquesquareroot} that  it suffices to compute matrix $B$ such that $(I-B)^2=I-A_1$. In what follows,  we propose algorithms for computing the Toeplitz part and the correction part of matrix $B$. 

\medskip

\section{Computing the Toeplitz part}\label{sec:symbol}
 Observe that the Toeplitz part $T(b)$ is uniquely determinate by the coefficients $b_j$ of the symbol $b(z)=\sum_{j\in \mathbb Z}b_jz^j$. In this section, we show that $b(z)$ can be approximated by $\hat{b}(z)=\sum_{i=-n+1}^n\hat{b}_jz^j$ in the sense that $\|b-\hat{b}\|_{\w}\leq c\epsilon$ for some constant $c$ and a given tolerance $\epsilon$.

Suppose $B=T(b)+E_B$ satisfies  $\gamma(I-B)^2=A$, where $A=\gamma(I-A_1)\in \QT_{\infty}$ is such that $A_1\geq 0$ and $\|A_1\|_{\infty}<1$. Suppose $T(a)$  is the Toeplitz part of $A$,   we have from  property (i) of Lemma \ref{summary} that  $\gamma(1-b(z))^2=a(z)$,
that is, 
\begin{align}\label{eq:symbol}
a(z)/\gamma=b(z)^2-2b(z)+1,
\end{align}
from which we obtain $b(z)=1\pm \sqrt{a(z)/\gamma}$.  Since  $A_1\geq 0$, in view of properties (ii)-(iv) of Lemma \ref{summary}, we have $a_1(1)=\|a_1\|_{\w}\leq \|A_1\|_{\infty}<1$, where $a_1(z)$ is the symbol of the Toeplitz part of $A_1$,  hence we deduce that  $a(1)=\gamma(1-a_1(1))>0$.  On the other hand, it follows from $B\geq 0$ that  $b(1)=\|b\|_{\w}= \|T(b)\|_{\infty}\leq \|B\|_{\infty}<1$, which, together with  $\sqrt{a(1)/\gamma}>0$, implies that  $b(1)=1-\sqrt{a(1)/\gamma}$ and therefore  $b(z)=1-\sqrt{a(z)/\gamma}$.
\smallskip

Let $n>0$ be a positive integer, set $m=2n$, then there is always a unique Laurent series $\hat{b}(z)=\sum_{j=-n+1}^n\hat{b}_jz^j$ such that $\hat{b}(\omega_m^{\ell})=b(\omega_m^{\ell})$, $\ell=-n+1,\ldots,n$, where $\omega_m$ is
the principal $m$-th root of 1, that is, $\omega_m=\cos\frac{2\pi}{m}+{\bf i}\sin\frac{2\pi}{m}$.   Based on the evaluation/interpolation technique, where the interpolation can be done by the means of the Fast Fourier Transform (FFT),   an approximation $\hat{b}_i$, $i = -n + 1, . . . , n$, to the coefficients $b_i$ of $b(z)$ can be obtained. Since $B\geq 0$, we have from property (iii) of Lemma \ref{summary} that $T(b)\geq 0$, so that  $b(z)=\sum_{i\in \mathbb Z}b_iz^i$ has nonnegative coefficients. If in addition $b''(z)\in \W$, 
the following lemma provides a bound to $|\hat{b}_i-b_i|$.

\begin{lemma}\cite[Lemma 3.1]{BMM}\label{lem:b}
For $g=\sum_{i\in \mathbb Z}g_iz^i\in \mathcal W$ with nonnegative coefficients, let $\hat{g}(z)=\sum_{j=-n+1}^n \hat{g}_jz^j$ be the Laurent polynomial interpolating $g(z)$ at the $m$-th roots of 1, i.e., $g(w_m^i)=\hat{g}(w_m^i)$ for $i=-n+1,\ldots, n$, where $m=2n$. If $g''(t)\in \mathcal W$, then $g''(1)\geq 0$ and 
\begin{align*}
g''(1)-\hat{g}''(1)\geq 2n\big(\sum_{j<-n+1}g_j+\sum_{j>n}g_j\big).
\end{align*}
Moreover, $0\leq \hat{g}_j-g_j\leq \frac{1}{2n}(g''(1)-\hat{g}''(1))$ for $j=-n+1,\ldots,n$.
\end{lemma}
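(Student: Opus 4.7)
The plan is to exploit the aliasing identity that arises from trigonometric interpolation at the $m$-th roots of unity. From discrete orthogonality together with the interpolation conditions $\hat g(\omega_m^\ell)=g(\omega_m^\ell)$ for $\ell=-n+1,\ldots,n$, I would first derive
\[
\hat g_j \;=\; \sum_{k\in\mathbb Z} g_{j+km},\qquad j=-n+1,\ldots,n.
\]
Because $g_i\geq 0$ for every $i$, this representation immediately gives $\hat g_j-g_j=\sum_{k\neq 0}g_{j+km}\geq 0$, which is the lower inequality in the last assertion.

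Since $g''\in\mathcal W$, the series $g''(1)=\sum_{i\in\mathbb Z} i(i-1)g_i$ converges absolutely, and as $i(i-1)\geq 0$ for every $i\in\mathbb Z$, the inequality $g''(1)\geq 0$ follows term by term. I would then apply the aliasing formula to $\hat g''(1)=\sum_{j=-n+1}^n j(j-1)\hat g_j$ and reindex by $i=j+km$ to collect the contributions into
\[
g''(1)-\hat g''(1)\;=\;\sum_{i\notin\{-n+1,\ldots,n\}}\bigl[i(i-1)-r(i)(r(i)-1)\bigr]g_i,
\]
where $r(i)\in\{-n+1,\ldots,n\}$ is the unique representative of $i$ modulo $m$.

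The heart of the proof is the pointwise lower bound $i(i-1)-r(i)(r(i)-1)\geq m$ whenever $i\notin\{-n+1,\ldots,n\}$. Writing $i-r(i)=km$ with $k\neq 0$, the factorization
\[
i(i-1)-r(i)(r(i)-1)\;=\;(i-r(i))(i+r(i)-1)\;=\;km\bigl(i+r(i)-1\bigr)
\]
reduces matters to showing $k(i+r(i)-1)\geq 1$, which is dispatched by a short sign analysis: if $k\geq 1$ then $i\geq n+1$ and $r(i)\geq -n+1$, so $i+r(i)-1\geq 1$; if $k\leq -1$ then $i\leq -n$ and $r(i)\leq n$, so $i+r(i)-1\leq -1$ and the product of two negatives is again $\geq 1$. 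Feeding this uniform coefficient bound into the previous identity yields $g''(1)-\hat g''(1)\geq m\bigl(\sum_{j<-n+1}g_j+\sum_{j>n}g_j\bigr)$, the middle inequality of the lemma.

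For the upper estimate $\hat g_j-g_j\leq \frac{1}{2n}(g''(1)-\hat g''(1))$, I would fix $j\in\{-n+1,\ldots,n\}$ and note that each term $g_{j+km}$ with $k\neq 0$ contributes to the expansion of $g''(1)-\hat g''(1)$ with coefficient at least $m$; hence $m(\hat g_j-g_j)$ is bounded above by the portion of that sum supported on the residue class of $j$, and because all remaining terms are nonnegative, that partial sum is dominated by the full sum. The main obstacle in the whole argument is the coefficient bound in the third paragraph; once that case split is verified, everything else is routine bookkeeping around the aliasing identity.
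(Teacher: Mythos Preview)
The paper does not supply its own proof of this lemma; it is quoted verbatim as \cite[Lemma 3.1]{BMM} and used as a black box. So there is nothing in the present paper to compare against.

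That said, your argument is correct and is exactly the standard route: the aliasing identity $\hat g_j=\sum_{k\in\mathbb Z}g_{j+km}$ follows from discrete orthogonality at the $m$-th roots of unity (absolute convergence being guaranteed by $g\in\mathcal W$), the nonnegativity of $\hat g_j-g_j$ and of $g''(1)$ is immediate from $g_i\ge 0$ and $i(i-1)\ge 0$, and the factorization $i(i-1)-r(r-1)=(i-r)(i+r-1)=km(2r+km-1)$ together with the range $r\in\{-n+1,\dots,n\}$ gives the uniform lower bound $\ge m$ for $k\ne 0$ by the sign split you describe. The final step, bounding $m(\hat g_j-g_j)$ by the sum over the residue class of $j$ and then by the full nonnegative sum $g''(1)-\hat g''(1)$, is sound. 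This is almost certainly the argument in the cited source as well.
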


For  $\hat{b}(z)=\sum_{j=-n+1}^n \hat{b}_jz^j$ interpolating $b(z)$ at $\omega_m^i$ for $i=-n+1,\ldots,n$, suppose $b''(z)\in \mathcal W$ and $b''(1)>0$, we have from Lemma \ref{lem:b} that 
\begin{equation}\label{tail0}
b''(1)-\hat{b}''(1)\geq 2n\big(\sum_{j<-n+1}b_j+\sum_{j>n}b_j\big),
\end{equation}
and 
\begin{equation}\label{tail}
|\hat{b}_j-b_j|\leq \frac{1}{2n}(b''(1)-\hat{b}''(1)), \ j=-n+1,\ldots,n. 
\end{equation}
If $b''(1)-\hat{b}''(1)<\epsilon$ for a given tolerance $\epsilon  > 0$,   we have from \eqref{tail} that $|b_j-\hat{b}_j|\leq \epsilon/(2n)$ for $j=-n+1,\ldots,n$, which together with \eqref{tail0} implies that 
\begin{align*}
\|b-\hat{b}\|_{\w}&=\sum_{j=-n+1}^n|b_j-\hat{b}_j|+\sum_{j<-n+1}b_j+\sum_{j>n}b_j\\
&\leq \epsilon+\frac{1}{2n}(b''(1)-\hat{b}''(1))\\
&\leq (1+\frac{1}{2n})\epsilon.
\end{align*}
Hence, in the computation of  $\hat{b}_j, j=-n+1,\ldots, n$, under the evaluation/interpolation scheme,  the approximation is accurate enough if $b''(1)-\hat{b}''(1)<\epsilon$. Actually, the values of $b''(1)-\hat{b}''(1)$ can be easily obtained. Indeed,  once the coefficients $\hat{b}_j$ of $\hat{b}(z)=\sum_{j=-n+1}^n\hat{b}_jz^j$ are computed, one can easily obtain $\hat{b}''(1)=\sum_{j=-n+1}^nj(j-1)\hat{b}_j$. On the other hand,    we have from equation \eqref{eq:symbol} that
\begin{equation*}\label{twice}
b'(z)=\frac{a'(z)}{2\gamma(b(z)-1)}\  {\rm and} \ b''(z)=\frac{a''(z)-2\gamma(b'(z))^2}{2\gamma(b(z)-1)},
\end{equation*}
from which we easily obtain $b'(1)$ and $b''(1)$. 
\smallskip

Observe that equation \eqref{eq:symbol} is a special case of the quadratic  equation $$a_1(z)g(z)^2+(a_0(z)-1)g(z)+a_{-1}(z)=0,$$ where $a_i(z)$ for $i=-1,0,1$ are known functions in the class $\W$ and $g(z)$ is the function to be determined. Algorithms for computing the approximations of the coefficients of $g(z)$ has been proposed in \cite{BMM}, based on which we propose the following Algorithm \ref{alg:b} that is more efficient in computing the coefficients  $\hat{b}_j$  of the Laurent series $\hat{b}(z)=\sum_{j=-n+1}^n\hat{b}_jz^j$,  so that we get an approximation $T(\hat{b})$ to the Toeplitz part $T(b)$ in the sense that $\|T(b)-T(\hat{b})\|_{\infty}=\|b-\hat{b}\|_{\w}\leq (1+\frac{1}{2n})\epsilon$ for a given tolerance $\epsilon$.

\begin{algorithm}  
\caption{Approximation of $b(z)$}
\label{alg:b}
 \begin{algorithmic}[1]
 \REQUIRE{The coefficients of $a(z)$, a scalar $\gamma$ such that $A=\gamma(I-A_1)$  and a tolerance $\epsilon>0$.}
\ENSURE{Approximations $\hat b_j$, $j=-n+1,\ldots,n$,   to the coefficients $b_j$ of $b(z)$
 such that $|\hat{b}_j-b_j|\leq \epsilon/(2n)$.}
 
 \STATE{Set n=4, and compute $b(1)=1-\sqrt{a(1)/\gamma}$ and $b'(1)=\frac{a'(1)}{2\gamma(b(1)-1)}$ and $b''(1)=\frac{a''(1)-2\gamma(b'(1))^2}{2\gamma(b(1)-1)}$; 
 }

\STATE{Set $m=2n$ and $w_m=\cos\frac{2\pi}{m}+{\bf i} \sin\frac{2\pi}{m}$. Evaluate $a(z)$ at $z=w_m^{i}$ for $i=-n+1,\ldots,n$;
}
\STATE{For $i=-n+1,\ldots, n$, compute  $s_i=1-\sqrt{a(\omega_m^{i})/\gamma}$;
}
\STATE{Interpolate the values $s_i$, $i=-n+1,\ldots, n$, by means of FFT and obtain the coefficients $\hat{b}_j$ of the Laurent polynomial $\hat{b}(z)=\sum_{j=-n+1}^n\hat{b}_jz^j$ such that $b(w_m^{i})=\hat{b}(w_m^{i})$, $i=-n+1,\ldots,n$;
}
\STATE{Compute $\hat{b}''(1)=\sum_{j=-n+1}^nj(j-1)\hat{b}_j$ and $\delta_m=b''(1)-\hat{b}''(1)$;
}
\STATE{If $\delta_m< \epsilon$ then exit, else set $n=2n$ and compute from Step 2.
}
\end{algorithmic}
\end{algorithm}

It can be seen that the overall computational cost  of Algorithm \ref{alg:b} is $O(n \log n)$ arithmetic operations. Now the Toeplitz part of matrix $B$  is approximated by $T(\hat{b})$,  it remains to compute the correction part of $B$ in order to complete the computation of the square root. We show this subject in next section.

\section{Computing the correction part}

Suppose $A=\beta(I-A_1)\in \QT_{\infty}$, where $A_1\geq 0$ and $\|A_1\|_{\infty}<1$, then for $B=T(b)+E_B\geq 0$ and $\|B\|_{\infty}<1$ such that $(I-B)^2=I-A_1$, we design and analyze the convergence of a fixed-point iteration and a structure-preserving doubling algorithm  that can be used for the computation of $E_B$.

\subsection{Fixed-point iteration}

 Consider the nonlinear matrix equation 
 \begin{equation*}
(I-T(b)-X)^2=I-A_1
 \end{equation*}  
which  can be equivalently written as 
\begin{equation}\label{eq:sr}
	X^2-(I-T(b))X-X(I-T(b))+Q=0,
\end{equation}
where $Q=A_1+T(b)^2-2T(b)$.  It is clear that $E_B$ solves equation \eqref{eq:sr}.  On the other hand, it follows from Lemma  \ref{thm:uniquesquareroot} that $I-A_1$ allows a unique  quasi-Toeplitz $M$-matrix as a square root, so that $E_B$ is the unique solution of equation \eqref{eq:sr} such that    $T(b)+E_B\geq 0$ and $\|T(b)+E_B\|_{\infty}<1$.  \medskip

Observe that  equation \eqref{eq:sr} can be equivalently  written as $X=(2I-T(b)-X)^{-1}(Q+XT(b))$, from which  we propose  the following iteration  
\begin{align}\label{ifpi}
X_{k+1}=(2I-T(b)-X_k)^{-1}(Q+X_kT(b))
\end{align}
with $X_0=0$. We show that the sequence $\{X_k\}$ converges to $E_B$. To this end, we first show the following result. 

\begin{theorem}\label{thm:fixed_se}
Let $A=\beta(I-A_1)\in \mathcal{QT}_{\infty}$ with $A_1\geq 0$ and $\|A_1\|_{\infty}<1$. Suppose $B=T(b)+E_B\in \QT_{\infty}$ is the unique quasi-Toeplitz matrix such that $B\geq 0$, $\|B\|_{\infty}<1$, and $(I-B)^2=I-A_1$. Then, the sequence $\{X_k\}$  generated by  iteration \eqref{ifpi} satisfies
\begin{itemize}
\item[{\rm (i)}] the sequence $\{X_k\}$ is well defined;
\item[{\rm (ii)}] $T(b)+X_k\geq 0$ and $\|T(b)+X_k\|_{\infty}<1$. 
\end{itemize}
\end{theorem}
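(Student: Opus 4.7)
The plan is to prove the two claims simultaneously by induction on $k$, after recasting the iteration in terms of $Y_k := T(b) + X_k$. A direct manipulation of \eqref{ifpi}---multiply by $2I - T(b) - X_k$ on the left and substitute $Q = A_1 + T(b)^2 - 2T(b)$---collapses the recurrence to
\begin{equation*}
(2I - Y_k)\, Y_{k+1} = A_1, \qquad \text{equivalently} \qquad Y_{k+1} = (2I - Y_k)^{-1} A_1.
\end{equation*}
This identity is the engine of the proof: once $Y_k$ is controlled, both the well-definedness of $X_{k+1}$ and the bounds on $Y_{k+1}$ follow from properties of $(2I - Y_k)^{-1}$ combined with $\|A_1\|_{\infty} < 1$.

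For the base case $k = 0$, we have $Y_0 = T(b)$; property (iii) of Lemma~\ref{summary} applied to $B \geq 0$ gives $T(b) \geq 0$, and property (ii) gives $\|T(b)\|_{\infty} \leq \|B\|_{\infty} < 1$.

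For the inductive step, assume $Y_k \geq 0$ and $\|Y_k\|_{\infty} < 1$. Since $\|Y_k/2\|_{\infty} < 1/2$, the Neumann series
\begin{equation*}
(2I - Y_k)^{-1} = \tfrac{1}{2} \sum_{j \geq 0} (Y_k/2)^j
\end{equation*}
converges in the Banach algebra $\QT_{\infty}$; this delivers invertibility (so $X_{k+1}$ is well defined and stays in $\QT_{\infty}$), and the nonnegativity of $Y_k$ forces $(2I - Y_k)^{-1} \geq 0$ term by term. Hence $Y_{k+1} = (2I - Y_k)^{-1} A_1 \geq 0$, which is the nonnegativity half of part (ii). For the norm, submultiplicativity together with the geometric-series estimate yields
\begin{equation*}
\|Y_{k+1}\|_{\infty} \leq \|(2I - Y_k)^{-1}\|_{\infty}\, \|A_1\|_{\infty} \leq \frac{\|A_1\|_{\infty}}{2 - \|Y_k\|_{\infty}} < 1,
\end{equation*}
where the final strict inequality uses $\|A_1\|_{\infty} < 1$ and $\|Y_k\|_{\infty} < 1$. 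This closes the induction and simultaneously proves (i) and (ii).

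I expect the main (and essentially only) obstacle to be spotting the clean algebraic collapse $(2I - Y_k)\,Y_{k+1} = A_1$; without it the raw formula for $X_{k+1}$ is awkward to control, but with it the proof reduces to standard Neumann-series manipulations inside $\QT_{\infty}$. A minor detail worth noting in passing is that $X_{k+1} = Y_{k+1} - T(b)$ is automatically a legitimate element of $\QT_{\infty}$ thanks to the Banach-algebra structure and the convergence of the Neumann series, so the iteration can indeed be continued indefinitely.
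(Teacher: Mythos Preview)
Your proof is correct and follows essentially the same approach as the paper's: both arguments reduce the iteration to the identity $T(b)+X_{k+1}=(2I-T(b)-X_k)^{-1}A_1$ (your $Y_{k+1}=(2I-Y_k)^{-1}A_1$), then use the Neumann series to get nonnegativity and the bound $\|Y_{k+1}\|_{\infty}\le \|A_1\|_{\infty}/(2-\|Y_k\|_{\infty})<1$, closing the induction. Your introduction of the variable $Y_k$ merely streamlines the bookkeeping; the substance is identical.
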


\begin{proof}

Concerning item (i), observe that $X_{k+1}$ is well defined as long as $2I-T(b)-X_k$ is invertible. It follows from \cite[Lemma 3.1.5]{RV} that  $2I-T(b)-X_k$ is invertible if $\|T(b)+X_k\|_{\infty}<2$,  which can be verified if item (ii) is true.  Hence, it suffices to prove item (ii).
		
We prove  item (ii) by induction. For $k=0$, we have $T(b)+X_0=T(b)\geq 0$,   where the inequality follows from property (iii) of Lemma \ref{summary} and the fact $B\geq 0$. On the other hand, we have from property (ii) of Lemma \ref{summary} that $\|T(b)+X_0\|_{\infty}\leq \|B\|_{\infty}<1$. For the inductive step, assume that $T(b)+X_k\geq 0$ and $\|T(b)+X_k\|_{\infty}<1$, we show that  $T(b)+X_{k+1}\geq 0$ and $\|T(b)+X_{k+1}\|_{\infty}<1$. 
	
	 Observe that
		\begin{align*}
		X_{k+1}&=(2I-T(b)-X_k)^{-1}(A_1-(2I-T(b)-X_k)T(b))\\
		&=(2I-T(b)-X_k)^{-1}A_1-T(b), 
	\end{align*}
	from which we have 
	$$T(b)+X_{k+1}=(2I-T(b)-X_k)^{-1}A_1. $$
	
	It follows from \cite[Lemma 3.1.5]{RV} that  
	\begin{equation*}\label{inver}
		(2I-T(b)-X_k)^{-1}=\frac{1}{2}\sum_{i=0}^{\infty}\big(\frac12(T(b)+X_k)\big)^i,
	\end{equation*}
	so that   $(2I-T(b)-X_k)^{-1}\geq 0$ since $T(b)+X_k\geq 0$. Recall that $A_1\geq 0$,  we thus have $(2I-T(b)-X_k)^{-1}A_1\geq 0$, that is, $T(b)+X_{k+1}\geq 0$.
	\smallskip
	
It remains to show $\|T(b)+X_{k+1}\|_{\infty}<1$. Observe that
\[
\begin{aligned}
\|T(b)+X_{k+1}\|_{\infty}&=\|(2I-T(b)-X_k)^{-1}A_1\|_{\infty}\\
&\leq \|(2I-T(b)-X_k)^{-1}\|_{\infty}\|A_1\|_{\infty}\\
&\leq \frac{\|A_1\|_{\infty}}{2-\|T(b)+X_k\|_{\infty}},
\end{aligned}
\]
where the last inequality holds since 
\begin{align}\label{inver2}
\|(2I-T(b)-X_k)^{-1}\|_{\infty}&\leq \frac12\sum_{i=0}^{\infty}\big(\frac12\|T(b)+X_k\|_{\infty}\big)^i\notag \\
&=\frac{1}{2-\|T(b)+X_k\|_{\infty}}.
\end{align}
Recall that $\|T(b)+X_k\|_{\infty}<1$ and $\|A_1\|_{\infty}<1$,  one can check that
$$\frac{\|A_1\|_{\infty}}{2-\|T(b)+X_k\|_{\infty}}<1,$$ that is,  $\|T(b)+X_{k+1}\|_{\infty} <1$. 
\end{proof}

The following result shows the convergence of  sequence $\{X_k\}$. 
\begin{theorem}
Let $A=\beta(I-A_1)\in \mathcal{QT}_{\infty}$ with $A_1\geq 0$ and $\|A_1\|_{\infty}<1$. Suppose $B=T(b)+E_B\in \QT_{\infty}$ is the unique quasi-Toeplitz matrix such that $B\geq 0$, $\|B\|_{\infty}<1$ and $(I-B)^2=I-A_1$.	Then the sequence $\{X_k\}$ generated by  iteration \eqref{ifpi}  converges to $E_B$ in the sense that $\lim_{k\rightarrow \infty}\|E_B-X_{k}\|_{\infty}=0$.
\end{theorem}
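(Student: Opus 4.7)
The plan is to derive a linear-contraction estimate in the infinity norm by working with the shifted iterates $Y_k := T(b) + X_k$ and the shifted limit $Y := T(b) + E_B$. The key observation, already used implicitly in the proof of Theorem \ref{thm:fixed_se}, is that the recursion for $X_k$ can be rewritten as $Y_{k+1} = (2I - Y_k)^{-1} A_1$, and since $E_B$ is a solution of \eqref{eq:sr}, the same algebra gives $Y = (2I - Y)^{-1} A_1$. Hence $Y$ is a fixed point of the map $Z \mapsto (2I - Z)^{-1} A_1$, and the convergence question reduces to showing that this map contracts $Y_k$ toward $Y$.

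I would next subtract these two identities and apply the resolvent identity $M^{-1} - N^{-1} = M^{-1}(N - M) N^{-1}$ with $M = 2I - Y_k$ and $N = 2I - Y$. Using $(2I - Y)^{-1} A_1 = Y$ and $Y_k - Y = X_k - E_B$, this yields
\begin{equation*}
X_{k+1} - E_B \;=\; (2I - Y_k)^{-1} (X_k - E_B)\, Y.
\end{equation*}
Taking infinity norms and invoking the Neumann-series bound already established in \eqref{inver2}, namely $\|(2I - Y_k)^{-1}\|_\infty \le 1/(2 - \|Y_k\|_\infty) < 1$ by Theorem \ref{thm:fixed_se}(ii), together with $\|Y\|_\infty = \|B\|_\infty < 1$, I would obtain
\begin{equation*}
\|X_{k+1} - E_B\|_\infty \;\le\; \|B\|_\infty\, \|X_k - E_B\|_\infty,
\end{equation*}
so that $\|X_k - E_B\|_\infty \le \|B\|_\infty^{k}\|E_B\|_\infty \to 0$ as $k \to \infty$.

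The main issue is not a computation but a bookkeeping check: one must confirm that every resolvent in sight is well defined. Invertibility of $2I - Y_k$ is guaranteed by Theorem \ref{thm:fixed_se}(i), while invertibility of $2I - Y$ follows from $\|Y\|_\infty = \|B\|_\infty < 1 < 2$ by the same Neumann-series argument. The two strict inequalities $\|Y_k\|_\infty < 1$ and $\|B\|_\infty < 1$ are precisely what is needed to make the contraction factor strictly less than one, and together they deliver linear convergence with rate bounded by $\|B\|_\infty$.
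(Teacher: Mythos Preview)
Your proposal is correct and follows essentially the same route as the paper: both arguments derive the error recursion $E_B - X_{k+1} = (2I - T(b) - X_k)^{-1}(E_B - X_k)B$ (you obtain it via the resolvent identity applied to the shifted iterates $Y_k$, the paper simply states it as ``a direct computation''), and then bound it using the Neumann estimate \eqref{inver2} together with $\|T(b)+X_k\|_\infty<1$ from Theorem~\ref{thm:fixed_se} to get the contraction factor $\|B\|_\infty<1$. The only cosmetic difference is that the paper works with $E_k=E_B-X_k$ directly rather than your $Y_k=T(b)+X_k$, but the substance is identical.
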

 
\begin{proof}
Let $E_k=E_B-X_k$, a direct computation yields
\begin{equation*}
E_{k+1}=(2I-T(b)-X_k)^{-1}E_kB,
\end{equation*}
which, together with \eqref{inver2}, yields
\begin{equation}\label{in:Ek}
\|E_{k+1}\|_{\infty}\leq \frac{\|B\|_{\infty}}{2-\|T(b)+X_k\|_{\infty}}\|E_k\|_{\infty}.
\end{equation}
Since $\|T(b)+X_k\|_{\infty}<1$,  it follows that $\frac{\|B\|_{\infty}}{2-\|T(b)+X_k\|_{\infty}}<\|B\|_{\infty}$, so that 
$$\|E_{k+1}\|_{\infty}\leq \|B\|_{\infty}\|E_k\|_{\infty}\leq \|B\|_{\infty}^k\|E_0\|_{\infty}.$$ Since  $\|B\|_{\infty}<1$, it implies that $\lim_{k\rightarrow \infty}\|E_B-X_k\|_{\infty}=0$.
\end{proof}

We may observe from inequality \eqref{in:Ek} that  the sequence $\{X_k\}$ generated by iteration \eqref{ifpi} satisfies $\|X_{k+1}-E_B\|_{\infty}\leq \frac{\|B\|_{\infty}}{2-\|T(b)+X_k\|_{\infty}}\|X_k-E_B\|_{\infty}$. The fact  
$\frac{\|B\|_{\infty}}{2-\|T(b)+X_k\|_{\infty}}<\|B\|_{\infty}$ may provide some insights to say that the  fixed-point iteration \eqref{ifpi}, which is used for the computation of the correction part, converges faster than the Binomial iteration \cite{qtm_laa} in the computation of the whole square root, as the sequence $\{Y_k\}$ generated by the Binomial iteration $Y_{k+1}=\frac{1}{2}(A_1+Y_k^2)$ with $Y_0=0$ satisfies that $\|Y_{k+1}-B\|_{\infty}\leq \|B\|_{\infty}\|Y_k-B\|_{\infty}$.

\subsection{Structure-preserving Doubling Algorithm} 
We show that a structure-preserving doubling algorithm (SDA)  is applicable in the computation of $E_B$ such that $(I-T(b)-E_B)^2=A$, where $A$ is an invertible quasi-Toeplitz $M$-matrix. This method has been motivated by the ideas in \cite{dario_beatrice}, where the SDA that enables refining an initial approximation is applied to solve quadratic matrix equations with quasi-Toeplitz coefficients. We fist recall  the design and  convergence analysis of SDA. For more details of SDA, we refer the reader to \cite{dario_beatrice}, \cite[Chapter 5]{DBB} and \cite{Rencang-Li}.

In the finite dimensional space, the design of SDA is based on a linear pencil $M-\lambda N$, where $M$ and $N$ are $2n \times 2n$ matrices  of the  form 
\begin{equation}\label{eq:ssf1}
M=\left[\begin{array}{cc}E& O\\ -P & I \end{array}\right], \quad N=\left[\begin{array}{cc} I& -Q\\ O& F\end{array}\right],
\end{equation}
where $E, F, P ,Q$ are $n\times n$ matrices, $I$  and $O$ are, respectively, the $n\times n$ identity matrix and the zero matrix. Suppose there are $n\times n$ matrices $X$ and $W$ such that 
\begin{equation*}
M\left[\begin{array}{c}I\\ X \end{array}\right]=N\left[\begin{array}{c}I\\ X \end{array}\right]W. 
\end{equation*}
Then, the SDA consists in computing the sequences defined as
\begin{equation}\label{eq:SDA} 
\begin{aligned}
E_{k+1}&=E_k(I-Q_kP_k)^{-1}E_k\\
P_{k+1}&=P_k+F_k(I-P_kQ_k)^{-1}P_kE_k;\\
F_{k+1}&=F_k(I-P_kQ_k)^{-1}F_k;\\
Q_{k+1}&=Q_k+E_k(I-Q_kP_k)^{-1}Q_kF_k,
\end{aligned}
\end{equation}
where $E_0=E, F_0=F, P_0=P$ and $Q_0=Q$. 
\medskip

We mention that the scheme \eqref{eq:SDA} is quite related to the forms of matrices $M$ and $N$ in \eqref{eq:ssf1}, which is called the standard structured form-I. For different forms, say the standard structured form-II (see \cite[Chapter 5]{DBB}), different schemes can be obtained.   

Concerning the convergence results of SDA, it has been proved in \cite{dario_beatrice} that 
\begin{lemma}\cite[Theorem 2]{dario_beatrice}\label{lem:conver_sda}
Let $X,Y, W, V$ be $n\times n$ matrices such that 
\begin{equation*}
M\left[\begin{array}{c}I\\ X \end{array}\right]=N\left[\begin{array}{c}I\\ X \end{array}\right]W, \quad M\left[\begin{array}{c}Y\\ I \end{array}\right]V=N\left[\begin{array}{c}Y\\ I \end{array}\right],
\end{equation*}
and  it satisfies that $\rho(W)\leq 1$, $\rho(V)\leq 1$,  $\rho(W)\rho(V)<1$. If the scheme \eqref{eq:SDA} can be carried out with no breakdown, then $\lim_k\|X-P_k\|^{1/{2^k}}\leq \rho(W)\rho(V)$ and $\lim_k\|Y-Q_k\|^{1/{2^k}}\leq \rho(W)\rho(V)$.
\end{lemma}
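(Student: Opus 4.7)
The plan is to prove the result by establishing a doubling invariance for the SDA iterates. Assembling $E_k, F_k, P_k, Q_k$ into block matrices
$$M_k=\begin{bmatrix}E_k & O\\ -P_k & I \end{bmatrix}, \qquad N_k=\begin{bmatrix} I & -Q_k\\ O & F_k\end{bmatrix},$$
I would show by induction on $k$ that
$$M_k\begin{bmatrix}I\\ X \end{bmatrix}=N_k\begin{bmatrix}I\\ X \end{bmatrix}W^{2^k}, \qquad M_k\begin{bmatrix}Y\\ I \end{bmatrix}V^{2^k}=N_k\begin{bmatrix}Y\\ I \end{bmatrix}.$$
Once this doubling property is available, reading the bottom block of the first identity and the top block of the second will express $P_k-X$ and $Q_k-Y$ as products containing the factors $W^{2^k}$ and $V^{2^k}$, so that the claimed bound will follow from Gelfand's formula $\lim_k\|W^{2^k}\|^{1/2^k}=\rho(W)$, $\lim_k\|V^{2^k}\|^{1/2^k}=\rho(V)$ together with the hypothesis $\rho(W)\rho(V)<1$.

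The inductive step is the heart of the matter. The base case $k=0$ is precisely the hypothesis combined with the structured form \eqref{eq:ssf1}. For the step from $k$ to $k+1$, one exploits the no-breakdown assumption by inverting the block matrix
$$\begin{bmatrix} I & -Q_k \\ -P_k & I\end{bmatrix}^{-1}=\begin{bmatrix} (I-Q_kP_k)^{-1} & (I-Q_kP_k)^{-1}Q_k \\ P_k(I-Q_kP_k)^{-1} & (I-P_kQ_k)^{-1}\end{bmatrix},$$
and then verifying by direct block computation that the updates \eqref{eq:SDA} produce exactly the matrices of structured form-I satisfying $(N_k^{-1}M_k)^2 = N_{k+1}^{-1}M_{k+1}$. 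Squaring the pencil doubles the exponent of $W$ on the right-invariant subspace and of $V$ on the left-invariant subspace, delivering the claim at step $k+1$.

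Having secured the doubling identity, I would extract the convergence estimate as follows. Expanding the bottom block of the first invariance yields a relation of the form $P_k = X + \Phi_k W^{2^k}$, where $\Phi_k$ is built from $E_k, F_k, X$ and $(I-Q_kP_k)^{-1}$; the parallel argument on the left-invariant subspace gives $Q_k = Y + V^{2^k}\Psi_k$. Feeding one into the other and rearranging produces
$$\|P_k - X\| \le c_k \|W^{2^k}\|\,\|V^{2^k}\|, \qquad \|Q_k - Y\| \le c_k'\|W^{2^k}\|\,\|V^{2^k}\|,$$
for constants $c_k, c_k'$ that remain bounded as $k\to\infty$. Taking $2^k$-th roots and invoking Gelfand's formula then yields the asserted bound $\rho(W)\rho(V)$.

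The main obstacle is the inductive verification that the specific update formulas \eqref{eq:SDA} implement the pencil squaring $(N_k^{-1}M_k)^2 = N_{k+1}^{-1}M_{k+1}$: preserving the structured form-I forces a very particular choice of $E_{k+1}, F_{k+1}, P_{k+1}, Q_{k+1}$, and matching these to the Schur-complement expressions in \eqref{eq:SDA} requires careful and somewhat tedious block algebra. A secondary challenge is the uniform boundedness of $c_k$ and $c_k'$: one must rule out that these prefactors grow faster than $(\|W^{2^k}\|\|V^{2^k}\|)^{-\varepsilon/2^k}$, which can be shown by an auxiliary induction exploiting the fact that $(I-Q_kP_k)^{-1}$ and $(I-P_kQ_k)^{-1}$ approach the identity once $P_k$ and $Q_k$ are close to $X$ and $Y$.
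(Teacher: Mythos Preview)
The paper does not supply a proof of this lemma: it is quoted verbatim from \cite[Theorem~2]{dario_beatrice} and used as a black box. There is therefore nothing in the present paper to compare your attempt against.

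That said, your outline is the standard route to such SDA convergence results and is almost certainly what the cited reference carries out: one shows by induction that the structured form-I is preserved under the updates \eqref{eq:SDA} with the invariance relations squared at each step, reads off from the block rows that $X-P_k=F_k(I-XQ_k)W^{2^k}$ and $Y-Q_k=E_k(I-YP_k)V^{2^k}$ (or equivalent expressions), and then couples the two to produce factors $W^{2^k}$ and $V^{2^k}$ simultaneously before invoking Gelfand's formula. Your identification of the two delicate points---the block-algebra verification that \eqref{eq:SDA} realises the pencil squaring in structured form-I, and the control of the prefactors $c_k,c_k'$---is accurate; the second is typically handled by showing $\limsup_k\|E_k\|^{1/2^k}\le\rho(W)$ and $\limsup_k\|F_k\|^{1/2^k}\le\rho(V)$ directly from the recursions, rather than by the bootstrapping you suggest, but either works.
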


Concerning the feasibility of SDA in the infinite dimensional spaces, it has been shown in \cite[page 11]{dario_beatrice} that  the  convergence results of SDA still hold when matrices belonging to the Banach algebra  $\QT_{\infty}$. We are ready to show how  SDA can be applied in the computation of  $E_B$.

\medskip

Suppose $A=I-A_1\in \QT_{\infty}$ is such that $A_1\geq 0$ and $\|A_1\|_{\infty}<1$, we have from Lemma \ref{thm:uniquesquareroot} that the matrix equation 
\begin{align}\label{eq:square}
(I-X)^2=I-A_1
\end{align}
 has a unique nonnegative solution $B\in \QT_{\infty}$ satisfying $\|B\|_{\infty}<1$.   Observe that equation \eqref{eq:square} can be equivalently written as \begin{align}\label{eq:quadratic}
X^2-2X+A_1=0,
\end{align}
so that $B$ solves equation \eqref{eq:quadratic} and is the unique solution such that $B\geq 0$ and $\|B\|_{\infty}<1$. 
Let $V=(2I-B)^{-1}$, it is easy to check that $V$ solves the quadratic matrix equation 
\begin{align}\label{eq:quadratic2}
A_1Y^2-2Y+I=0.
\end{align}
Moreover,  we have  $V=\frac12\sum_{i=0}^{\infty}(\frac12B)^i\geq 0$ and $\|V\|_{\infty}\leq \frac{1}{2}\sum_{i=1}^{\infty}(\frac12\|B\|_{\infty})^i=\frac{1}{2-\|B\|_{\infty}}<1$. 

\smallskip
Suppose $T(b)$ with $b\in \mathcal W$ is the Toeplitz part of $B$, replacing $X$ by $T(b)+H$ in equation \eqref{eq:quadratic} results in the following quadratic matrix equation 
\begin{equation}\label{eq:quadratic3}
H^2+(T(b)-2I)H+HT(b)+R=0,
\end{equation}
where $R=T(b)^2-2T(b)+A_1$. Then, equation \eqref{eq:quadratic3} can be equivalently written  as 
\[
\widetilde{M}\left[\begin{array}{cc} I\\ H  \end{array}\right]=\widetilde{N}\left[\begin{array}{cc} I\\ H  \end{array}\right]B,
\]
where $\widetilde{M}=\left[\begin{array}{cc} T(b)& I\\ -R & 2I-T(b) \end{array}\right]$ and $\widetilde{N}=\left[\begin{array}{cc} I&0\\ 0&I \end{array}\right]$. 
\medskip


According to \cite[Theorem 3]{dario_beatrice}, the pencil $\widetilde{M}-\lambda \widetilde{N}$ can be transformed into the pencil $\M-\lambda \N$, where $\M$ and $\N$ are of the form 
\begin{equation*}
\M=\left[\begin{array}{cc}SA_1& 0\\-SR & I\end{array}\right],\quad  \N=\left[\begin{array}{cc}I &-S\\ 0 &S\end{array} \right],
\end{equation*}
where $S=2I-T(b)$. It can be seen that $\M$ and $\N$ are of the same forms as those in \eqref{eq:ssf1}, 
and  we have 
$$
\M\left[\begin{array}{c}I \\H \end{array}\right]=\N \left[\begin{array}{c}I \\H \end{array}\right]B,$$
so that  SDA can be applied to the pencil $\M-\lambda \N$, which  consists of 
computing the sequences as defined in the scheme \eqref{eq:SDA} by setting 
\[
P_0=E_0=(2I-T(b))^{-1}B,\quad  \quad Q_0=F_0=S.
\]

On the other hand, it can be verified that  the matrices $\M$ and $\N$ also satisfy 
\begin{equation}\label{eq:H}
\M \left[\begin{array}{c}Y \\I  \end{array}\right]Z=\N \left[\begin{array}{c}Y \\I \end{array}\right], 
\end{equation}
where $Y=V(I-T(b)V)^{-1}, Z=(I-T(b)V)V(I-T(b)V)^{-1}$. It can be seen that $Z$ has the same spectrum as $V$ so that  $\rho(Z)=\rho(V)\leq \|V\|_{\infty}<1$,  we then have from the fact $\rho(B)\leq \|B\|_{\infty}<1$ that $\rho(B)\rho(Z)<1$. Hence, according to Lemma \ref{lem:conver_sda}, we obtain the following convergence result of SDA when applying to the pencil $\M-\lambda \N$. 
\begin{theorem}
For $A=I-A_1\in \QT_{\infty}$ such that $A_1\geq 0$ and $\|A_1\|_{\infty}<1$, suppose $I-B$ with $B=T(b)+E_B\in \QT_{\infty}$ is the unique quasi-Toeplitz $M$-matrix such that $(I-B)^2=A$. If the  scheme \eqref{eq:SDA} can be carried out with no breakdown, then the sequence $\{P_k\}$ converges to $E_B$ and it satisfies $\lim_k\|E_B-P_k\|^{1/{2^k}}\leq \rho(B)\rho(Z)$, where $Z=(I-T(b)V)(2I-B)^{-1}(I-T(b)V)^{-1}$ and $V=(2I-B)^{-1}$. 

 \end{theorem}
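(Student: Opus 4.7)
My plan is to derive this theorem as an almost immediate corollary of Lemma \ref{lem:conver_sda}, since all the algebraic groundwork has already been laid in the paragraphs preceding the statement. The proof therefore reduces to (a) matching notation between the general SDA setup and the specific pencil $\M - \lambda \N$, and (b) verifying the spectral radius hypotheses required by the lemma.

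For step (a), I would take, in the notation of Lemma \ref{lem:conver_sda}, $X = E_B$ together with $W = B$: the identity
\[
\M\left[\begin{array}{c}I\\ E_B\end{array}\right] = \N\left[\begin{array}{c}I\\ E_B\end{array}\right]B
\]
is precisely what was obtained from the reformulation \eqref{eq:quadratic3} of the quadratic matrix equation satisfied by $H = E_B$. For the dual identity, I would take $Y = V(I - T(b)V)^{-1}$ and let the lemma's ``$V$'' be our $Z = (I - T(b)V)V(I - T(b)V)^{-1}$; this is exactly equation \eqref{eq:H}. The invertibility of $I - T(b)V$ that is needed to define $Y$ and $Z$ follows from $\|T(b)V\|_\infty \leq \|T(b)\|_\infty \|V\|_\infty < 1$, using property (ii) of Lemma \ref{summary} together with the Neumann-series bound $\|V\|_\infty \leq 1/(2 - \|B\|_\infty) < 1$ already established in the discussion above.

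For step (b), the spectral hypotheses are verified as follows. From $\|B\|_\infty < 1$ we get $\rho(B) \leq \|B\|_\infty < 1$. Since $Z$ is a similarity transform of $V$, it has the same spectrum as $V$, and hence $\rho(Z) = \rho(V) \leq \|V\|_\infty < 1$; consequently $\rho(B)\rho(Z) < 1$, which is all that Lemma \ref{lem:conver_sda} requires. Invoking the lemma under the stated no-breakdown hypothesis then yields $\lim_k \|E_B - P_k\|^{1/2^k} \leq \rho(B)\rho(Z)$, and the strict inequality $\rho(B)\rho(Z) < 1$ implies $P_k \to E_B$.

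The only point that is not completely mechanical is justifying that Lemma \ref{lem:conver_sda}, originally stated for finite matrices, remains valid for operators in the Banach algebra $\QT_\infty$; however, this is precisely the content of the remark from \cite[page 11]{dario_beatrice} that the paper has already cited, so nothing essentially new has to be proved. I therefore expect no serious obstacle: the theorem is a bookkeeping exercise once the pencil $\M - \lambda \N$ is correctly identified in standard structured form-I with the initialization $P_0 = E_0 = (2I - T(b))^{-1}B$ and $Q_0 = F_0 = 2I - T(b)$, which has already been done.
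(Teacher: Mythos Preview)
Your proposal is correct and follows essentially the same approach as the paper: the paper does not give a separate formal proof after the theorem statement, but rather derives everything in the paragraphs immediately preceding it, and your write-up recapitulates exactly those steps (the two pencil identities, the similarity $\rho(Z)=\rho(V)$, the bounds $\rho(B)\le\|B\|_\infty<1$ and $\rho(V)\le\|V\|_\infty<1$, and the appeal to the infinite-dimensional version of Lemma~\ref{lem:conver_sda} via \cite{dario_beatrice}). The only thing you add beyond the paper is the explicit check that $I-T(b)V$ is invertible, which is a welcome clarification.
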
 
 
 Actually, according to the ideas in \cite{dario_beatrice}, the  scheme  \eqref{eq:SDA} allows to refine a given initial approximation to $E_B$, that is, if $E_B=\tilde{E}_{B}+D$, where $\tilde{E}_B$ is given and it satisfies $\|T(b)+\tilde{E}_B\|_{\infty}<1$, then SDA can be used to compute $D$. Indeed, if 
 $H$ in equation \eqref{eq:quadratic3}is replaced by $\tilde{E}_B+D$, it yields
 \begin{equation}\label{eq:eq:quadratic4}
 	D^2+(T(b)+\tilde{E}_B-2I)D+D(T(b)+\tilde{E}_B)+\tilde{R}=0,
 \end{equation}
 where $\tilde{R}=(T(b)+\tilde{E}_B)^2-2(T(b)+\tilde{E}_B)+A_1$. Analogously to the analysis above, we obtain the matrix pencil $\widehat{\M}-\lambda\widehat{N}$ such that 

\begin{equation*}
\widehat{\M}=\left[\begin{array}{cc}\tilde{S}A_1& 0\\-\tilde{S}\tilde{R} & I\end{array}\right],\quad  \widehat{\N}=\left[\begin{array}{cc}I &-\tilde{S}\\ 0 &\tilde{S}\end{array} \right],
\end{equation*}
where $\tilde{S}=(2I-T(b)-\tilde{E}_B)^{-1}$, and it holds 
\begin{equation*}
\widehat{\M}\left[\begin{array}{c}I \\D  \end{array}\right]=\widehat{\N} \left[\begin{array}{c}I \\D  \end{array}\right]B, \quad \widehat{\M} \left[\begin{array}{c}\tilde{Y} \\I  \end{array}\right]\tilde{Z}=\widehat{\N} \left[\begin{array}{c}\tilde{Y} \\I \end{array}\right],
\end{equation*}   
where $\tilde{Y}=V(I-(T(b)+\tilde{E}_B)V)^{-1}, \tilde{Z}=(I-(T(b)+\tilde{E}_B)V)V(I-(T(b)+\tilde{E}_B)V)^{-1}$. 

Now  apply SDA  to the pencil $\widehat{M}-\lambda\widehat{N}$, we obtain the sequences defined as
\begin{equation}\label{eq:SDA2} 
\begin{aligned}
\tilde{E}_{k+1}&=\tilde{E}_k(I-\tilde{Q}_k\tilde{P}_k)^{-1}\tilde{E}_k\\
\tilde{P}_{k+1}&=\tilde{P}_k+\tilde{F}_k(I-\tilde{P}_k\tilde{Q}_k)^{-1}\tilde{P}_k\tilde{E}_k;\\
\tilde{F}_{k+1}&=\tilde{F}_k(I-\tilde{P}_k\tilde{Q}_k)^{-1}\tilde{F}_k;\\
\tilde{Q}_{k+1}&=\tilde{Q}_k+\tilde{E}_k(I-\tilde{Q}_k\tilde{P}_k)^{-1}\tilde{Q}_k\tilde{F}_k,
\end{aligned}
\end{equation}
where $P_0=E_0=\tilde{S}A_1$ and $Q_0=F_0=\tilde{S}$. 

Observe that $\rho(\tilde{Z})=\rho(V)<1$, then according to Lemma \ref{lem:conver_sda} it holds that $\lim_k\|\tilde{P}_k-D\|_{\infty}^{1/{2^k}}<\rho(B)\rho(V)<1$, that is, the sequence $\{\tilde{P}_k\}$ converges to $D$, so that $E_B=\tilde{E}_B+D$ is computed.  

\smallskip
One alternative is to set $\tilde{E}_B=(b(1) -T(b)\one)e_1^T$, where $\one=(1,1,\ldots)^T$ and $e_1=(1,0,\ldots)^T$, then  $T(b)+\tilde{E}_B$ is a nonnegative substochastic matrix such that $(T(b)+\tilde{E}_B)\one =b(1)\one$. Numerical experiments in Section \ref{ne} shows that there are cases where a reduction in  CPU time occurs when setting   $\tilde{E}_B=(b(1) -T(b)\one)e_1^T$ and applying iteration \eqref{eq:SDA2} for computing  $D$.
\smallskip

We mention that when applying the fixed-point iteration and  SDA to compute the correction part of a quasi-Toeplitz $M$-matrix, the computations rely on the package CQT-Toolbox of \cite{BSR} which implements the operations of semi-infinite quasi-Toeplitz matrices. In next section, we show that the the fixed-point iteration and SDA can be applied to a finite dimensional nonlinear matrix equation,  whose solution after extending to infinity is a good approximation to $E_B$. 
\section{Truncation to a finite dimensional matrix equation}

Recall that the correction part of a quasi-Toeplitz matrix $A=T(a)+E\in \QT_{\infty}$ satisfies $\lim_i\sum_{j=1}^{\infty}|e_{i,j}|=0$ for $E=(e_{i,j})_{i,j\in\mathbb Z^+}$.  Denote by $E^{(k)}$ the infinite matrix that coincides with the leading principal $k\times k$ submatrix of $E$ and is zero elsewhere, it follows form \cite[Lemma 2.9]{BMML2020} that there is a matrix $E^{(k)}$ such that $\lim_{k\rightarrow \infty}\|E-E^{(k)}\|_{\infty}=0$.  

 For an invertible $M$-matrix $A=I-A_1\in \QT_{\infty}$, suppose  $(I-T(b)-E_B)^2=A$, then for $E_B$ and a given $\epsilon>0$,  there is a  sufficiently large $k$  such that 
 \begin{equation}\label{eq:bound}
 \|E_B^{(k)}-E_B\|_{\infty}<\epsilon.
 \end{equation} 
If we partition $E_B$ into $E_B=\left(\begin{array}{cc}E_{11} & E_{12}\\ E_{21}& E_{22} \end{array}\right)$, where $E_{11}$ is  the principal $k\times k$ submatrix of $E_B$, $E_{12}\in \mathbb R^{k\times \infty}$, $E_{21}\in \mathbb R^{\infty\times k}$ and $E_{22}\in \mathbb R^{\infty\times \infty}$, it follows from $\|E_B^{(k)}-E_B\|_{\infty}<\epsilon$ that $\|E_{12}\|_{\infty}<\epsilon, \|E_{21}\|_{\infty}<\epsilon$ and $\|E_{22}\|_{\infty}<\epsilon$. 

Let $W=2T(b)-A_1-T(b)^2$, then $T(b)$ and $W$ can be partitioned into $T(b)=\left(\begin{array}{cc}T_{11}& T_{12}\\ T_{21}& T_{22} \end{array}\right)$ and $W=\left(\begin{array}{cc}W_{11}& W_{12}\\ W_{21}& W_{22} \end{array}\right)$, where $T_{11}$ and $W_{11}$ are, respectively, the principal $k\times k$ submatrices of $T(b)$ and $W$. Substituting $E_B, T(b)$ and $W$ into the equation $(I-T(b)-E_B)^2=I-A_1$, we get
\begin{equation}\label{eq:finite1}
E_{11}^2-(I_k-T_{11})E_{11}-E_{11}(I_k-T_{11})=W_{11}-E_{12}E_{21}-E_{12}T_{21}-T_{12}E_{21},
\end{equation}
where $I_k$ is the  identity matrix of size $k$. 

Consider the matrix equation 

\begin{equation}\label{eq:finite2}
G^2-(I_k-T_{11})G-G(I_k-T_{11})=W_{11},
\end{equation}
which is equivalent to 
\begin{equation}\label{eq:finite22}
(I_k-T_{11}-G)^2=I-A_{11}-T_{12}T_{21},
\end{equation}
where $A_{11}$ is the principal $k\times k$ submatrix of $A_1$.  Observe that $A_{11}\geq 0$ and $T_{12}T_{21}\geq 0$, if in addition  $\rho(A_{11}+T_{12}T_{21})<1$, which can be verified if $\|A_{11}+T_{12}T_{21}\|_{\infty}<1$,  then  $I-A_{11}-T_{12}T_{21}$ is  a nonsingular $M$-matrix.   In what follows we assume  $\|A_{11}+T_{12}T_{21}\|_{\infty}<1$, then $I-A_{11}-T_{12}T_{21}$  admits a unique $M$-matrix as a square root (see \cite[Theorem 6.18]{higham}), so that equation \eqref{eq:finite22}, as well as equation \eqref{eq:finite2}, has a unique solution $G$ such that $T_{11}+G\geq 0$ and $\rho(T_{11}+G)<1$. In fact, analogously to \cite[Theorem 3.1]{qtm_laa}, it is can be seen that $\|T_{11}+G\|_{\infty}<1$. 

\smallskip

Subtracting equation \eqref{eq:finite1} form  equation \eqref{eq:finite2} yields
\begin{equation}\label{eq:delta1}
G^2-E_{11}^2-(G-E_{11})(I_k-T_{11})-(I_k-T_{11})(G-E_{11})=\Delta W,
\end{equation}
where  $\Delta W=E_{12}E_{21}+E_{12}T_{21}+T_{12}E_{21}$.  It can be seen that 
\begin{align}\label{eq:W}
\|\Delta W\|_{\infty}&=\|E_{12}E_{21}+E_{12}T_{21}+T_{12}E_{21}\|_{\infty}\notag \\
&\leq \epsilon^2+\|T_{21}\|_{\infty}\epsilon+\|T_{12}\|_{\infty}\epsilon\notag\\
&\leq (2\|b\|_{\w}+\epsilon)\epsilon,
\end{align}
where the last inequality holds as $\|T_{12}\|_{\infty}\leq \|T(b)\|_{\infty}=\|b\|_{\w}$ and  $\|T_{21}\|_{\infty}\leq \|T(b)\|_{\infty}=\|b\|_{\w}$.
\smallskip

On the other hand, a direct computation of equation \eqref{eq:delta1} yields
\begin{equation}\label{eq:delta2}
(2I_k-T_{11}-G)(G-E_{11})-(G-E_{11})(T_{11}+E_{11})=-\Delta W. 
\end{equation}
 Observe that $2I_k-T_{11}-G$ is a nonsingular $M$-matrix as $T_{11}+G\geq 0$ and $\rho(T_{11}+G)<1$. Moreover, we have $\|T_{11}+E_{11}\|_{\infty}<1$  as $T_{11}+E_{11}$ is the principal $k\times k$ submatrix of $T(b)+E_B$ and $\|T(b)+E_B\|_{\infty}<1$. Then one can check that
  \begin{equation}\label{eq:delta3}
 G-E_{11}=-\sum_{j=1}^{\infty}(2I_k-T_{11}-G)^{-j-1}\Delta W (T_{11}+E_{11})^j
 \end{equation}
 is well defined and it solves equation \eqref{eq:delta2}. 
 \smallskip

 Let $\alpha=\|(2I_k-T_{11}-G)^{-1}\|_{\infty}$ and $\beta=\|T_{11}+E_{11}\|_{\infty}$, we have $\alpha=\frac{1}{2}\|\sum_{j=0}^{\infty}(\frac12(T_{11}+G))^j\|_{\infty}\leq \frac{1}{2-\|T_{11}+G\|_{\infty}}$, so that $\alpha\beta\leq \frac{\beta}{2-\|T_{11}+G\|_{\infty}}<1$ since $\|T_{11}+G\|_{\infty}< 1$ and $\beta<1$. 
 Then we deduce from \eqref{eq:W} and \eqref{eq:delta3} that 
\begin{align}\label{ine:bound1}
\|G-E_{11}\|_{\infty}&\leq \sum_{j=1}^{\infty}(\alpha\beta)^j\alpha \|\Delta W\|_{\infty}\notag \\
&\leq \frac{\alpha}{1-\alpha\beta}(2\|b\|_{w}+\epsilon)\epsilon. 
\end{align}

Let $E_G$  be the matrix that coincides in the leading principal $k\times k$ submatrix with $G$ and is zero elsewhere, then we have from \eqref{eq:bound} and \eqref{ine:bound1} that 

\begin{align}\label{ine:bound}
\|E_G-E_B\|_{\infty}&\leq \|E_G-E_B^{(k)}\|_{\infty}+\|E_B^{(k)}-E_B\|_{\infty}\notag\\
&\leq \|G-E_{11}\|_{\infty}+\epsilon\notag \\
&\leq(1+ \frac{\alpha}{1-\alpha\beta}(2\|b\|_{w}+\epsilon))\epsilon.
\end{align}

Hence, we can see from \eqref{ine:bound} that for a given $\epsilon>0$ and sufficiently large $k$, if $\alpha\beta\leq c<1$ for some constant $c$, then $E_G$ may serve as a good approximation to $E_B$. This  implies  that the correction part $E_B$ can be approximated by firstly computing the numerical solution of  equation \eqref{eq:finite2} and then extending the computed solution to infinity.  

 It is not difficult to see  that  the fixed-point iteration \eqref{ifpi} and SDA can be applied  to equation \eqref{eq:finite2} for computing the solution $G$. Numerical experiments in next section show that when the size $k$ is small, it is efficient to  approximate the correction part $E_B$ by computing the solution of equation \eqref{eq:finite2} and extending it to infinity, while when $k$ is large, that is, the coefficients are large-scale matrices, both fixed-point iteration and SDA lose the effectiveness.  
\smallskip

We provide some insight on how to select integer  $k$ such that the matrix  $G$ of size $k\times k$, after extending to infinity, is approximate enough to $E_B$.  Observe that the substitution of $E_G$ into the equation $(I-T(b)-X)^2=A$ yields
\begin{align*}
A-(I-T(b)-E_G)^2=\left(\begin{array}{cc} 0& GT_{12}-W_{12}\\ T_{21}G_{11}-W_{21} & -W_{22}, \end{array}\right),
\end{align*}
from which we see that $E_G$ is a good approximation to $E_B$ if   $\|GT_{12}-W_{12}\|_{\infty}<c\epsilon$, $\|T_{21}G-W_{21}\|_{\infty}<c\epsilon$ and $\|W_{22}\|_{\infty}<c\epsilon$ for some constant $c$ and a given $\epsilon>0$. It can be seen that these inequalities hold if  
\begin{equation}\label{ine:1}
\|GT_{12}\|<c_1\epsilon, 
\end{equation}
\begin{equation}\label{ine:2}
\|T_{21}G\|_{\infty}<c_2\epsilon,
\end{equation}
and
\begin{equation}\label{ine:3}
 \max\{\|W_{12}\|, \|W_{21}\|, \|W_{22}\|_{\infty}\}<c_3\epsilon,
 \end{equation}
for some constants $c_1, c_2$ and $c_3$. Hence, we can choose $k$  such that inequalities \eqref{ine:1}-\eqref{ine:3} are satisfied. 

Actually, since  $W$ is a correction matrix, one can check that  inequality \eqref{ine:3} holds if we choose $k$   such that $\|W-W^{(k)}\|_{\infty}<\epsilon$, where $W^{(k)}$  is the infinite matrix that coincides with the leading principal $k\times k$ submatrix of $W$ and is zero elsewhere. Hence, if the matrix $W$ has a nonzero part of size   $n_1\times n_2$, we can choose $k$ such that $k>\max\{n_1,n_2\}$.

We next show how to choose $k$ such that inequalities \eqref{ine:1} and \eqref{ine:2} hold.  Observe that for  $\epsilon>0$, there is $N\in \mathbb Z^+$ such that $\|E_B-E_B^{(n)}\|_{\infty}<\epsilon$ for any $n\geq N$. Set $k>N$ and $G=\left(\begin{array}{cc} G_{11} &G_{12}\\ G_{21}& G_{22} \end{array}\right)\in \mathbb R^{k\times k}$, where $G_{11}\in \mathbb R^{N\times N}, G_{12}\in \mathbb R^{N\times (k-N)}, G_{21}\in \mathbb R^{(k-N)\times N}$ and $G_{22}\in \mathbb R^{(k-N)\times (k-N)}$.  Observe that 
$$\|E_G-E_B^{(N)}\|_{\infty}\leq \|E_G-E_B\|_{\infty}+\|E_B-E_B^{(N)}\|_{\infty},$$
which, together with inequality \eqref{ine:bound} and the fact $\|E_B-E_B^{(N)}\|_{\infty}<\epsilon$, implies  that $\|E_G-E_B^{(N)}\|_{\infty}<\tilde{c}_1\epsilon$ for some constant $\tilde{c}_1$. On the other hand, observe that  $E_G-E_B^{(N)}$ coincides in the leading principal $k\times k$ submatrix  with $\left(\begin{array}{cc}*& G_{12}\\ G_{21}&G_{22}\end{array}\right)$ and is zero elsewhere, where $*$ is an $N\times N$ matrix,  we thus have $\|G_{12}\|_{\infty}<\tilde{c}_1\epsilon$, $\|G_{21}\|_{\infty}<\tilde{c}_1\epsilon$ and $\|G_{22}\|_{\infty}<\tilde{c}_1\epsilon$.

 \smallskip

Suppose $b(z)=\sum_{j=-q}^pb_jz^j$, then from the partition of $T(b)$ we know that $T_{12}=\left(\begin{array}{cc} O\\ \tilde{T}\end{array}\right)$, where $O$ is a zero matrix of size $(k-p)\times \infty$ and $\tilde{T}$ is a $p\times \infty$ matrix with a $p\times p$ nonzero submatrix located in the  bottom leftmost corner. If $k$ is selected such that $k-N>p$, we have from $\|G_{12}\|_{\infty}<\tilde{c}_1\epsilon$ and $\|G_{21}\|_{\infty}<\tilde{c}_1\epsilon$ that $\|GT_{12}\|_{\infty}\leq \max\{\|G_{12}\|_{\infty},\|G_{21}\|_{\infty}\}\|\tilde{T}\|_{\infty}<c_{1}\epsilon$ for some constant $c_1$. Similarly, if $k-N>q$, inequality \eqref{ine:2} holds.

The above analysis indicates that if the matrix $W$ has a nonzero part of size   $n_1\times n_2$ and the symbol $b$ of $T(b)$ is a Laurent series $b(z)=\sum_{j=-q}^pb_jz^j$, then we can choose $k$ such that
 \begin{equation}\label{N}
 k-N>p, K-N>q\  {\rm and}\ k>\max\{n_1,n_2\}. 
 \end{equation}
Observe that the value of $N$ in \eqref{N} is unknown, hence  we can obtain a necessary condition for determining  $k$, that is, $k>\max\{p,q,n_1,n_2\}$. In  our  numerical experiments, we have set $k=3\max\{p,q,n_1,n_2\}$ and it seems sufficient.

Note that equation \eqref{eq:finite1} is a special case of the following equation 
\[
X^2-AX-XA=B,
\] 
where $A$ is a large-scale nonsingular $M$-matrix with an almost Toeplitz structure, and $B$ is a low-rank matrix. It seems interesting to investigate whether there are more efficient algorithms for computing the  solution by exploiting the quasi-Toeplitz structure of $A$ and the low-rank structure of matrix $B$. We leave this as a future consideration.
 
 \section{Numerical experiments}\label{ne}
In this section, we show by numerical experiments the effectiveness of the fixed-point iteration \eqref{ifpi} and SDA. The computations of  semi-infinite quasi-Toeplitz matrices rely on the package CQT-Toolbox \cite{BSR}, which can be downloaded at https://github.com/numpi/cqt-toolbox, while computation of   the solution of equation \eqref{eq:finite2} is implemented relying on the standard finite size matrix operations. The tests were performed in MATLAB/version R2019b on the Dell Precision 5570 with an Intel Core i9-12900H and 64 GB main memory. We set the internal precision in the computations to {\tt threshold = 1.e-15}. For each experiment, the iteration is terminated if $\|(I-T(b)-X)^2-A\|_{\infty}/\|A\|_{\infty}\leq {\tt 1.e-13}$. The code is available from the authors upon request.

We recall that a quasi-Toeplitz matrix $A=T(a)+E_A$ is representable in MATLAB relying on the CQT-toolbox \cite{BSR} by {\tt A=cqt(an,ap,E)}, where  the vectors {\tt an} and {\tt ap} contain the coefficients of the symbol $a(z)$
with non negative and non positive indices, respectively, and $E$ is a finite matrix representing the  non zero part  of the correction $E_A$.

\begin{example}\label{exm:E}
 Let $A=I-S$ with $S=\tilde{S}/(\|\tilde{S}\|_{\infty}+1)$, where the construction of  $\tilde{S}$ in MATLAB is done as 
 ${\tt \tilde{S}=cqt(s_n,s_p, E_{\tilde{S}})}$.  We set ${\tt s_n=rand(32,1)}$, ${\tt s_p=rand(30,1)}$, ${\tt s_n(1)=s_p(1)=1}$.  For the frist test, we set 
 ${\tt E_{\tilde{S}}= rand(0,0)}$, while for the second test, we set ${\tt E_{\tilde{S}}=rand(1000,1000)}$. 
 \end{example}

Suppose $B=T(b)+E_B$ is such that $(I-B)^2=A$, we first compute by Algorithm \ref{alg:b} an approximation $\hat{b}(z)=\sum_{j=-n+1}^n\hat{b}_jz^j$ to the symbol $b(z)$ of $T(b)$,  then we apply the fixed-point iteration \eqref{ifpi} and SDA to compute $E_B$. In Figure \ref{fig:symbol}  we show  the graph of the computed coefficients $\hat{b}_j$, $j=-n+1,\ldots,n$. In Figure \ref{fig:correction}, we show the correction part $E_B=(e_{i,j})_{i,j\in \mathbb Z^+}$ in logarithmic scale, which is obtained by the fixed-point iteration. The number of iterations,  CPU times required in the computations and the relative residuals  are reported in Table \ref{tab_res}. In Table \ref{tab:band} we report the features of the computed $E_B$ computed by the fixed-point iteration, including band of the Toeplitz part,  the rank and  the number of the nonzero rows and columns of the correction part. 

 It can be seen from Table \ref{tab_res} that  the number of iterations required by SDA is much less than the number of iterations required by the fixed-point iteration. Concerning the CPU time,  we can see that   the fixed-point iteration  takes less time than SDA in Test 1, while in Test 2, the CPU time taken by SDA is about 1/3 of that taken by the fixed-point iteration. 
Moreover, in test 1, when  applying  SDA  to compute matrix $D$ such that $E_B=D+\tilde{E}_B$, where $\tilde{E}_B=(s(1) -T(s)\one)e_1^T$, it takes 119.56s, which provides a reduction in CPU time comparing  with the case where SDA is applied directly for the computation of $E_B$.


\begin{figure}\center
\includegraphics[width=74mm]{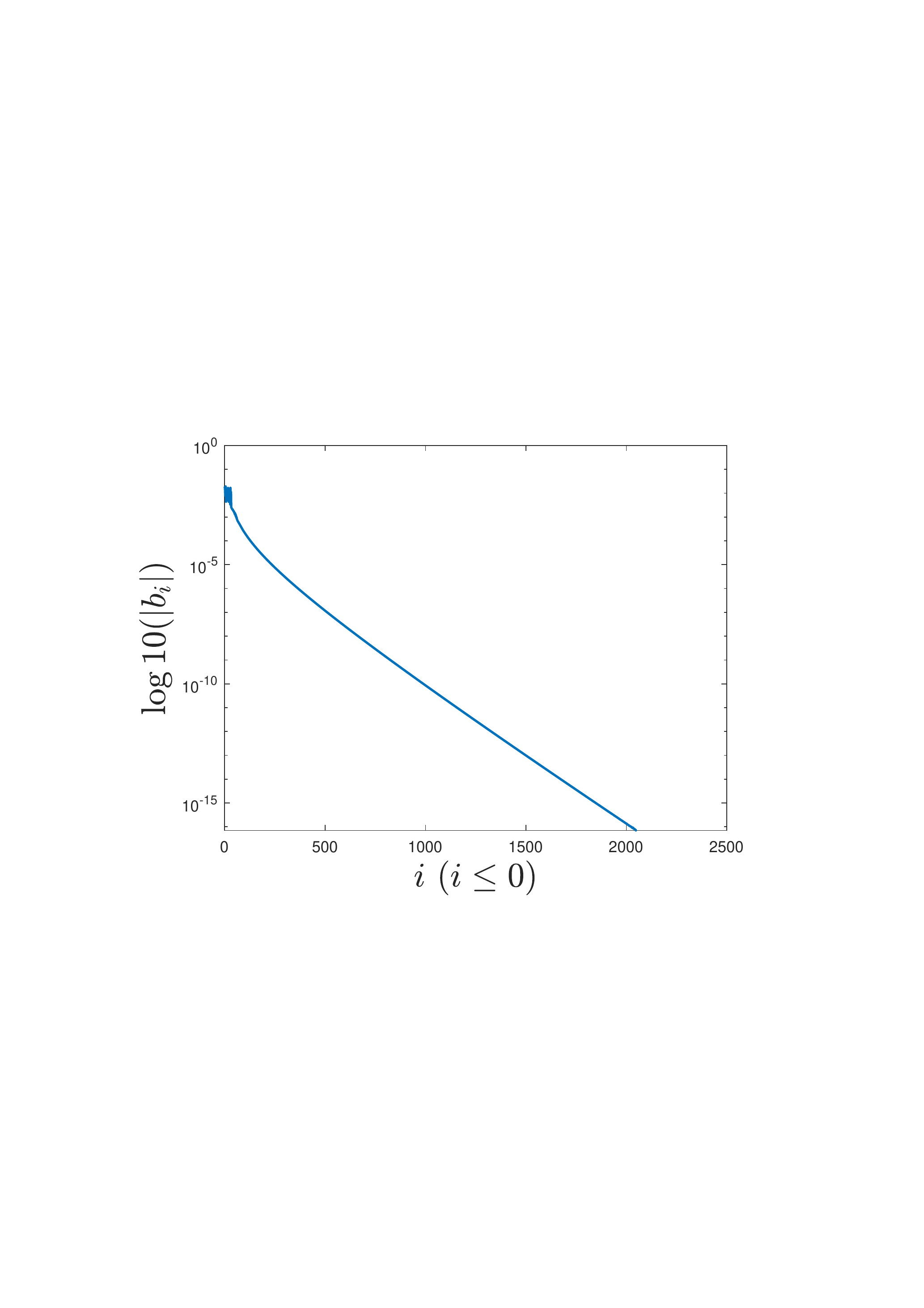}
\includegraphics[width=74mm]{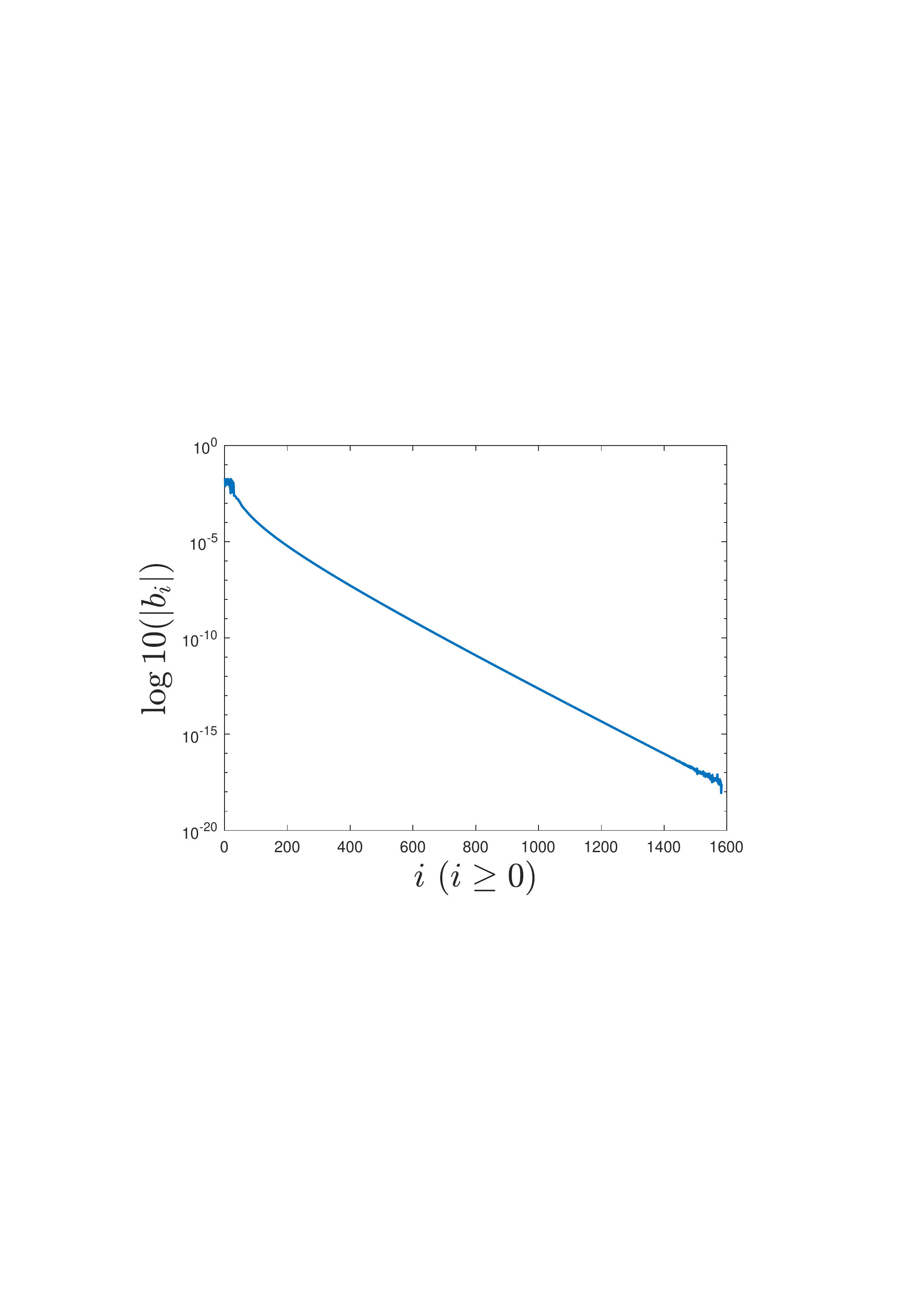}

\caption{Toeplitz part  of the computed  $B=T(b)+E_B$ in Test 1: the log-scale   of the absolute value of  coefficients $b_i$ of the symbol $b(z)$ for $i\leq 0$ (left) and for $i\geq 0$ (right). The coefficients are computed by Algorithm \ref{alg:b}.}\label{fig:symbol}

\end{figure}

\begin{figure}
\center
\includegraphics[width=80mm]{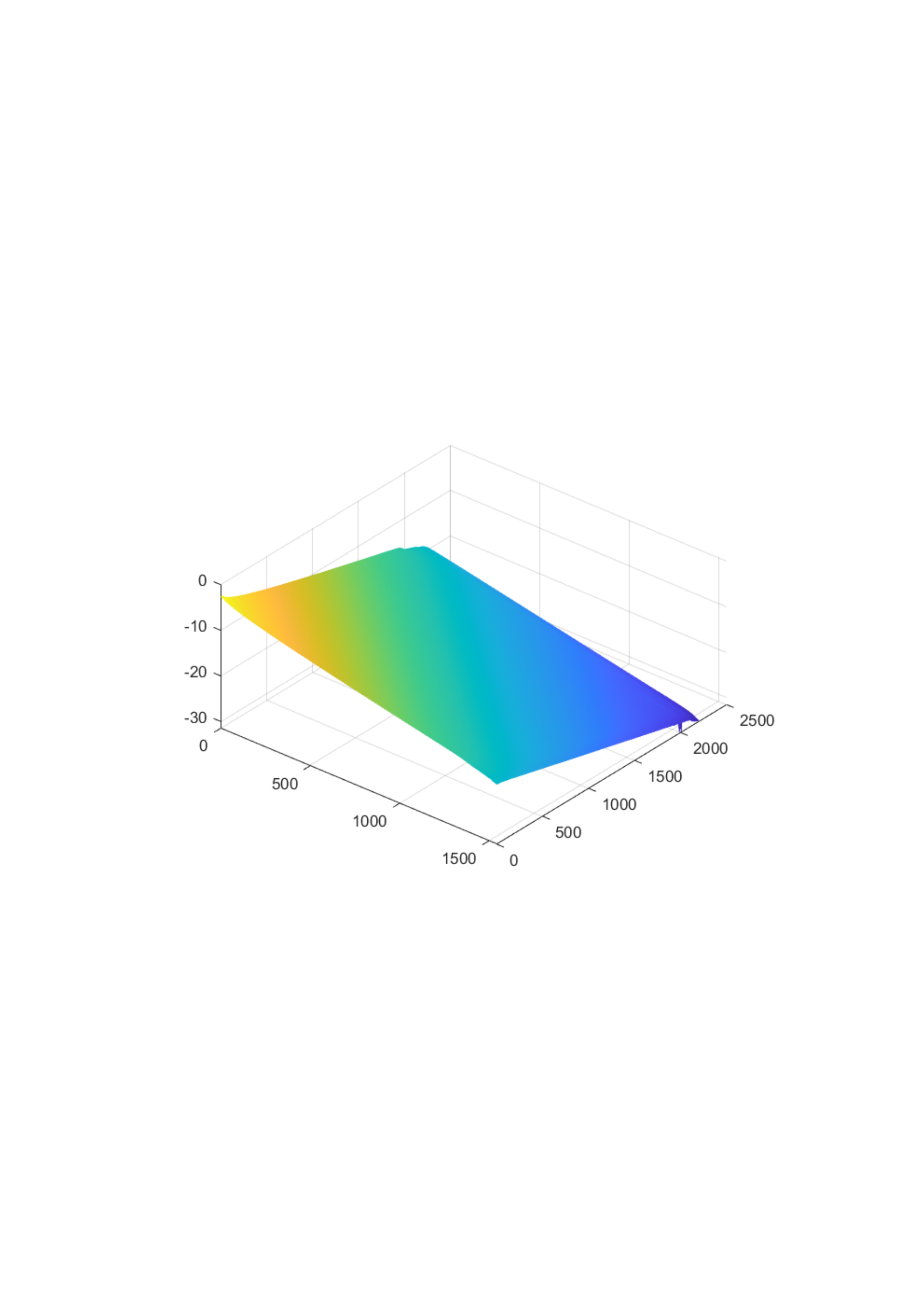}
\caption{The correction part $E_B$ in Test 1: absolute value of $E_B$ in log scale, where $E_B$ is computed by the fixed-point iteration \eqref{ifpi}. } \label{fig:correction}
\end{figure}

 \begin{table}
\begin{center}
\begin{tabular}{c|ccc|ccc}
\multicolumn{1}{c}{}&\multicolumn{3}{c}{Test 1} &\multicolumn{3}{c}{Test 2}\\ \hline
Iterations&res.&iter.&time& res.  &iter.&time \\\hline
FPI& {\tt 7.02e-14}& 55&{\tt 114.77 }&  {9.62e-14}&54 & {\tt 252.14}\\
SDA&  {\tt 4.42e-14}& 6& {\tt 170.07}& {\tt 6.61e-14}  &6& {\tt 81.01} \\
\end{tabular}
\caption{Relative residual, number of iterations, CPU time in seconds in the computation $E_B$. FPI means the fixed-point iteration.}\label{tab_res}
\end{center}
\end{table}

 \begin{table}
\begin{center}
\begin{tabular}{c|c|c}
\multicolumn{1}{c}{}&\multicolumn{1}{c}{Test 1} &\multicolumn{1}{c}{Test 2}\\\hline
Band&4200 & 376\\
Rows& 2799 & 1296\\
Columns&1319 &1162 \\
Rank& 80& 1026
\end{tabular}
\caption{Features of matrix $B=T(b)+E_B$ in Test 1 which is computed by FPI, including the band of the Toeplitz part $T(b)$, number of nonzero rows and columns, and  rank of the correction of the computed $E_B$.} \label{tab:band}
\end{center}
\end{table}

\begin{example}
Let $A=I-S$ with $S=T(s)+E_{S}\in \QT_{\infty}$, where $T(s)=s_0I$ with $s_0<1$ and $E_{B}$ is the correction matrix with a $(p+m+n)\times (p+m+n)$ leading submatrix $E_S^{P}$ and zero elsewhere. Here, $E_S^P=\left(\begin{array}{ccc}V_p& & \\ & O_m& \\ & & -s_0I_n\end{array}\right)$, where $O_m$ is the zero matrix of size $m\times m$, $I_n$ is the identity matrix of size $n$, and the matrix $V_p=\left(\begin{array}{cc} U_{p\times q}\\ O_{(q-p)\times q}\end{array}\right)$ is a $q\times q$ block matrix with
\begin{equation*}
U_{p\times q}=\left(\begin{array}{cccccc}
u_{11} & u_{12}& \cdots &  u_{1p}& \cdots & u_{1q}\\
0& u_{22} & \cdots & u_{2p}&\cdots & u_{2q}\\
\vdots & \ddots &\ddots &\vdots& \ddots& \vdots \\
0& 0& \cdots & u_{pp}&\cdots  & u_{pq}
\end{array}
\right)_{p\times q}.
\end{equation*}
where $u_{ii}=-s_0$ for $i=1,\ldots, p$, and $u_{i,j}\geq 0$ for $i=1,2,\ldots,p$ and $j=i+1\ldots,q$. Moreover, for $i=1,2,\ldots, p$, it satisfies that $\sum_{j=i+1}^qu_{ij}<1$. 

\end{example}

 \begin{table}
\begin{center}
\begin{tabular}{c|c|c|c|c| c}
Test& $s_0$& $m$ & $n$& $p$& $q$\\
\hline
1& 0.1 & 100 &1000&  1 & 100\\
2& 0.5& 100 & 1500 & 2& 100\\
3&  0.9& 100& 2000& 2& 100
\end{tabular}
\caption{Different values of the parameters $s_0, m, n, p$ and $q$.}\label{tab:tests}
\end{center}
\end{table}

For different values of the parameters $s_0, m, n, p$ and $q$ as listed in Table \ref{tab:tests}, we apply the  fixed-point iteration \eqref{ifpi} and SDA to compute the matrix $E_B$ such that  $(I-T(b)-E_B)^2=A$. It can be seen that the symbol $b(z)$ satisfies $(1-b(z))^2=1-s_0$,  which, together with the fact that $\|b\|_{\w}=\|T(b)\|_{\infty}<1$, implies  $b(z)=1-\sqrt{1-s_0}$, so that $T(b)$ is a diagonal matrix with diagonal elements being $1-\sqrt{1-s_0}$.

In this example, we observe that $E_B$ can be obtained by the fixed-point iteration  as well as  SDA  in just one or two steps. We also implement the Binomial iteration (BI) and the CR in \cite{qtm_laa} for computing the the whole matrix $B=T(b)+E_B$,  the CPU time and residual error are compared with the fixed-point iteration and SDA in the computation of $E_B$, and are reported in Table \ref{tab:comparingwithCR}. We mention that  the residual error for BI and CR is obtained by $r=\|(I-\hat{Y})^2-A\|_{\infty}/\|A\|_{\infty}$, where $\hat{Y}$ is the computed square root.

 \begin{table}
\begin{center}
\begin{tabular}{c|cc|cc|cc}
\multicolumn{1}{c}{}&\multicolumn{2}{c}{Test 1} &\multicolumn{2}{c}{Test 2}& \multicolumn{2}{c}{Test 3}\\ \hline  
Algorithms &time &res & time &res &time  &res \\\hline
FPI& 2.7734 & 1.01e-15 &  19.44&3.00e-15  &34.28 &3.41e-14 \\
SDA&  8.4440 &  1.40e-15  & 25.25 &2.35e-15  &  44.80 & 6.79e-14  \\
CR & 11.2681 &1.83e-15 & 44.96 &4.59e-15 & 95.91&7.48e-14\\
BI& 14.3910 & 7.65e-16 &48.40 &2.02e-15 &111.53&5.22e-14
\end{tabular}
\caption{Comparison of the fixed-point iteration \eqref{ifpi} and SDA in computing $E_B$ with the Binomial iteration and CR algorithm in computing $B$: the CPU time in seconds and relative residual in the computations.}\label{tab:comparingwithCR}
\end{center}
\end{table}

As we can see from Table \ref{tab:comparingwithCR}, the  fixed-point iteration \eqref{ifpi} and SDA   take less CPU time comparing with the Binomial iteration and CR algorithm. Moreover, the fixed-point iteration \eqref{ifpi}, comparing with CR algorithm,  has a speed-up in the CPU time by a factor of about 4 in Test 1 and 2.5 in Tests  2 and 3.

\begin{example}\label{exm:E}
Let  $A=cI-T(s)$ with $T(s)={\tt cqt(s_n,s_p)}$,  where $c$, ${\tt s_n}$ and ${\tt s_p}$  are constructed in MATLAB as 

${\tt s_p= rand(p,1)}$, ${\tt s_n=rand(q,1)}$, ${\tt s_n(1)=s_p(1)=1}$, ${\tt c=sum(s_n)+sum(s_p)}$.
\end{example}

It can be seen that $\|T(s)\|_{\infty}=\|s\|_{\w}<c$, so that $A$ is an invertible $M$-matrix. 
For different values of $p$ and $q$, we  apply the fixed-point iteration \eqref{ifpi} and SDA for computing  matrix $E_B$ such that $c(I-T(b)-E_B)^2=A$, where the symbol $b(z)$ is approximated by $\hat{b}(z)$ that is computed by Algorithm \ref{alg:b}. 



We also apply the fixed-point iteration and SDA to equation \eqref{eq:finite2} for computing its solution $G$, so that $E_B$ can be approximated by extending $G$ to infinity. Table \ref{compar} reports the CPU time taken by the fixed-point iteration and SDA  when applied to matrix equation \eqref{eq:finite2}, as well as the CPU time needed in the computation of the $E_B$ relying on the operations of quasi-Toeplitz matrices. 

We  observe from Table \ref{compar} that when the values of $p$ and $q$ are both small, say $p=4, q=2$, it seems that applying the fixed-point iteration \eqref{ifpi} and  SDA  to the truncated matrix equation \eqref{eq:finite2} takes less CPU time. For different values of $p$ and $q$ listed in Table \ref{compar}, the rank of the correction matrix is $k$=501, 1539, 8496 and 3834, respectively, we observe that when $k$ becomes large, the algorithms applied to the truncated matrix equation \eqref{eq:finite2} take more CPU times,  and it can be seen that the algorithms relying on operations of quasi-Toeplitz matrices are more efficient. 
\begin{table}
\begin{center}
\begin{tabular}{c|c|c}
   ($p,q$) &FPI& SDA\\\hline
 (4,2)&$2.79\cdot 10^{-2}$ $[1.09\cdot 10^{-1}] $& $1.95\cdot 10^{-2}$ $[1.13\cdot 10^{-1}]$ \\
(12,10) & $4.19\cdot 10^0$ $[3.48\cdot 10^0]$ &  $2.45\cdot 10^0$ $[5.61\cdot 10^0]$\\
 (20,2) &$6.90\cdot 10^2$ $[4.63\cdot 10^1]$ &  $6.56\cdot 10^2$ $[7.17\cdot 10^1]$\\
(20,20)&$7.52\cdot 10^1$ $ [2.49\cdot 10^1]$ & $3.53\cdot 10^1$ $[4.23\cdot 10^1 ]$ 
\end{tabular}
\caption{CPU time in seconds, needed by the fixed-point iteration and SDA for  computing a $k\times k$ matrix, which, after extending to infinity, is a good approximation to  $E_B$. For comparison, the CPU time needed by FPI and SDA relying on the operations of quasi-Toeplitz matrices is written between bracket.}\label{compar}
\end{center}
\end{table}

\section{Conclusions}
We have fully exploited the quasi-Toeplitz structure in the computation of the square root of invertible quasi-Toeplitz $M$-matrices. 
 We  propose algorithms for computing the Toeplitz part and the correction part respectively. The Toeplitz part is computed  by  Algorithm \ref{alg:b} at the basis of evaluation/interpolation at the $2n$ roots of unique. We propose a fixed-point iteration and a structure-preserving doubling algorithm for the computation of the correction part. Moreover, we show that the correction part can be approximated by extending the solution of a nonlinear matrix equation to infinity.  Numerical experiments  show that SDA in general takes less CPU time than the fixed-point iteration. There are also cases where the fixed-point iteration is inferior to  SDA. There are cases where both the fixed-point iteration and SDA work better than the Binomial iteration and CR algorithm that exploit the quasi-Toeplitz structure indirectly.

\end{document}